\numberwithin{equation}{section}
\def\F{\mathbb F}
\def\N{\mathbb N}
\def\Z{\mathbb Z}
\def\cC{\mathcal C}
\def\cF{\mathcal F}
\def\CFKi{CFK^\infty}
\def\fl{\textup{fl}}
\def\horz{\textup{horz}}
\def\vert{\textup{vert}}
\def\red{\textup{red}}
\def\cs{\mathrel\#}
\newtheorem{Theorem}{Theorem}
\newtheorem{theorem}{Theorem}[section]
\newtheorem{corollary}[theorem]{Corollary}
\newtheorem{lemma}[theorem]{Lemma}
\newtheorem{proposition}[theorem]{Proposition}
\theoremstyle{definition}
\newtheorem{definition}[theorem]{Definition}
\theoremstyle{remark}
\newtheorem{remark}[theorem]{Remark}
\newtheorem*{ack}{Acknowledgments}
\newtheorem*{org}{Organization}
\newcommand*\numberthis{
\addtocounter{equation}{1}
\tag\theequation }
\renewcommand*\env@cases[1][1.2]{
\let\@ifnextchar\new@ifnextchar
\left\lbrace
\def\arraystretch{#1}
\array{@{}l@{\quad}l@{}} }
\title{On the knot Floer filtration of the concordance group}
\author{Stephen Hancock}
\address{}
\email{ssh2127@columbia.edu}
\author{Jennifer Hom}
\address {Department of Mathematics, Columbia University, New York, NY 10027}
\email{hom@math.columbia.edu}
\author{Michael Newman}
\address{Department of Mathematics, University of Michigan, Ann Arbor, MI 48109}
\email{mgnewman@umich.edu}
\begin{document}

\begin{abstract}
The knot Floer complex together with the associated concordance invariant $\varepsilon$ can be used to define a filtration on the smooth concordance group. We show that the indexing set of this filtration contains $\N \times \Z$ as an ordered subset.
\end{abstract}
\maketitle

\section{Introduction}

Two knots in $S^3$ are called \emph{concordant} if they cobound a smooth, properly embedded cylinder in $S^3 \times [0,1]$. The set of knots in $S^3$, modulo concordance, forms an abelian group, the \emph{concordance group}, denoted $\cC$, where the operation is induced by connected sum. If a knot is concordant to the unknot, then we say that it is \emph{slice}. The inverse of a knot $K$ is given by $-K$, the reverse of the mirror of $K$. It is straightforward to show that $K_1$ and $K_2$ are concordant if and only if $K_1 \cs -K_2$ is slice.

A powerful tool for understanding knots is the knot Floer complex, defined by Ozsv\'ath and Szab\'o \cite{OSknot}, and independently Rasmussen \cite{Rknot}. To a knot $K$, they associate a $\Z \oplus \Z$-filtered chain complex, denoted $\CFKi(K)$, whose filtered chain homotopy type is an invariant of $K$. Associated to the complex $\CFKi(K)$ is a $\{-1, 0, 1\}$-valued concordance invariant $\varepsilon(K)$ defined in \cite{Homsmooth}. The set of such filtered chain complexes forms a monoid under the operation of tensor product, and modulo an equivalence relation defined in terms of $\varepsilon$, this monoid can be made into a group, denoted $\cF$.

The advantage of this approach is that there is a homomorphism from
\[ \cC \to \cF, \]
defined by $[K] \mapsto [\CFKi(K)]$. Moreover, the group $\cF$ has a rich algebraic structure coming from a total ordering. This ordering gives a filtration on $\cF$ that can be pulled back to a filtration on $\cC$, called the \emph{knot Floer filtration}. While the indexing set of the knot Floer filtration is largely unknown, our main theorem gives a lower bound on the complexity of this indexing set.

\begin{Theorem} \label{thm:ordertype}
The indexing set of the knot Floer filtration contains a subset that is order isomorphic to $\N \times \Z$. Specifically, we can index the filtration by
\[ S = \{(i, j) \mid (i, j) \geq (0, 0)\}, \]
where $S \subset \Z \times \Z$ inherits the lexicographical ordering. Furthermore, each successive quotient is infinite, i.e., for $(i, j), (i'\!, j') \in S$ and $(i, j) < (i'\!, j')$, we have that
\[ \Z \subset \cF_{(i'\!, j')}/\cF_{(i, j)}. \]
\end{Theorem}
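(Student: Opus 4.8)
The total order on $\cF$ is the one for which $[C]>[C']$ exactly when $\varepsilon\bigl(C\otimes (C')^{-1}\bigr)=1$, where $(C')^{-1}$ is the dual complex; equivalently $[C]>0$ iff $\varepsilon(C)=1$. The filtration naturally attached to a total order is the filtration by convex subgroups, so (up to the usual open/closed ambiguity) its indexing set is the linearly ordered set of Archimedean classes of $\cF$, and $\cF_{(i,j)}$ is to be read as the convex subgroup generated by a suitable element $C_{i,j}$. The plan is thus to produce, for every $(i,j)\in\N\times\Z$, a positive class $[C_{i,j}]\in\cF$---realized by an actual knot, so that the conclusion passes to the pulled-back filtration on $\cC$---whose Archimedean class lies strictly below that of $C_{i',j'}$ precisely when $(i,j)<(i',j')$ lexicographically, and then to set $\cF_{(i,j)}$ equal to the convex subgroup $\langle C_{i,j}\rangle$ it generates. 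Granting this, the ``furthermore'' is immediate: a quotient of a totally ordered abelian group by a convex subgroup inherits a total order, hence is torsion free, and as $\cF_{(i,j)}\subsetneq\cF_{(i',j')}$ whenever $(i,j)<(i',j')$, this quotient is a nonzero torsion-free abelian group and so contains $\Z$; one can also see $\Z$ explicitly, since the multiples $\{nC_{i',j'}\}_{n\in\Z}$ remain pairwise distinct modulo $\cF_{(i,j)}$.

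It is convenient to first reduce the problem using the Ozsv\'ath--Szab\'o invariant $\tau$, which descends to a surjective order-preserving homomorphism $\tau\colon\cF\to\Z$ with $\tau$ of the right-handed trefoil $T_{2,3}$ equal to $1$. Additivity of $\tau$ forces every element with $\tau\neq 0$ into the Archimedean class of $\pm[T_{2,3}]$ (if $\tau(C)=k\geq 1$ then $[C]<(k+1)[T_{2,3}]$ and $[T_{2,3}]<2[C]$, and symmetrically when $k\leq -1$); hence all of the classes we want have $\tau=0$ and therefore $\varepsilon=1$, as $0$ is the only element of $\cF$ with $\varepsilon=0$. In other words the entire copy of $\N\times\Z$ must be found inside the convex subgroup $\ker\tau$. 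For building blocks I would take knots with $\tau=0$ and $\varepsilon=1$ whose knot Floer complex is explicitly computable---suitably framed Whitehead doubles of torus knots, say, together with their iterated satellites---and assemble a two-parameter family $C_{i,j}$ in which $j\in\Z$ is a framing parameter that, at a fixed ``level'' $i$, produces a $\Z$-chain of successively higher Archimedean classes, while $i\in\N$ records how many times a fixed satellite operation has been applied, arranged so that \emph{every} class at level $i$ is strictly below \emph{every} class at level $i+1$. Concretely one has to verify: (a) $\varepsilon\bigl(C_{i,j+1}\otimes C_{i,j}^{-n}\bigr)=1$ for all $n\geq 1$; and (b) $\varepsilon\bigl(C_{i+1,j'}\otimes C_{i,j}^{-n}\bigr)=1$ for all $n\geq 1$ and all $j,j'$. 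Transitivity of the order then arranges the classes of the $C_{i,j}$ exactly as $\N\times\Z$.

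The main obstacle is (a) and especially (b): one must compute $\varepsilon$ of arbitrarily large tensor powers and show the sign is stably $+1$, and for (b) this must hold uniformly over the level-$i$ knots $C_{i,j}$, whose complexes have unbounded complexity as $j\to\infty$, so no single finite-complexity level-$(i+1)$ complex can dominate them merely by being bigger. The route through this is to build the family so that, although the level-$i$ complexes grow without bound in one filtration direction as $j$ increases, they stay uniformly bounded in the transverse direction, and to push each level-$(i+1)$ complex out beyond that bound---this is precisely what manufactures a Dedekind cut in the Archimedean spectrum between consecutive levels. Carrying it out means exhibiting, for the relevant tensor products, an explicit chain-level cycle generating the homology whose filtration level is bounded independently of $n$, most naturally by working with staircase-type models and the partial order they support; this $n$-uniform control of $\varepsilon$ is what I expect to be the technical heart of the proof.
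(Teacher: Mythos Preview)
Your high-level framework is correct and matches the paper: exhibit, for each $(i,j)\in S$, a knot $K_{(i,j)}$ whose class in $\cF$ lies in a distinct Archimedean class, with the Archimedean classes ordered exactly as $S$; then pull back the associated convex subgroups to $\cC$. Your argument for the ``furthermore'' clause is also fine.

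However, the reduction to $\ker\tau$ rests on a false claim. You assert that $\tau$ descends to an order-preserving homomorphism $\cF\to\Z$, i.e.\ that $\varepsilon(C)=1$ implies $\tau(C)\geq 0$. This is not true: one can have $\varepsilon(K)=1$ while $\tau(K)<0$. Indeed, the paper's own knots for the case $i>0$, $j\geq 0$ are $K_{(i,j)}=T_{p,p+1;m,mp(p-1)+1}\,\#\,{-T_{pm,pm+1}}$ with $m=i+1\geq 2$, and a genus computation gives $\tau(K_{(i,j)})=m^2p(p-1)/2-pm(pm-1)/2=-pm(m-1)/2<0$, yet these classes are positive in $\cF$. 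So your inequality $[C]<(k+1)[T_{2,3}]$ when $\tau(C)=k$ fails: $\tau\bigl((k+1)T_{2,3}-C\bigr)>0$ does not force $\varepsilon>0$. Consequently there is no reason the $\N\times\Z$ must sit inside $\ker\tau$, and in the paper it does not.

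Beyond this, the proposal remains a sketch: Whitehead doubles and ``iterated satellites'' are suggested without a concrete family or any mechanism for verifying (a) and (b). The paper takes a quite different and much more explicit route. It works entirely with $L$-space knots (torus knots and their cables), whose knot Floer complexes are staircases determined by the Alexander polynomial. To each such staircase it attaches a finite integer sequence $[a_1,a_2,\ldots,a_m]$, and proves two ordering lemmas: roughly, $[a_1,a_2,\ldots]\gg[b_1,b_2,\ldots]$ whenever $b_1>a_1$, or $b_1=a_1$ and $b_2<a_2$ (Lemma~\ref{lem:order i}), together with a finer comparison among classes of the form $[(1,a)_1^p,1,a+c]$ (Lemma~\ref{lem:order j}). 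The $n$-uniform control of $\varepsilon$ you flag as the technical heart is carried out at exactly this level, by explicit filtered basis changes in tensor products of staircases. The knots $K_{(i,j)}$ are then built as differences of two $L$-space knots so that $\llbracket K_{(i,j)}\rrbracket$ is Archimedean equivalent to $[1,i,1,2i+1+j(i+1)]$ (for $j\geq 0$) or $[(1,i)_1^{-j},1,i,1,2i+1]$ (for $j\leq 0$), and the ordering lemmas immediately give the $\N\times\Z$ structure.
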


Filtrations have been shown to be an effective tool for studying the concordance group. For example, Cochran, Orr, and Teichner \cite{COT} define the $n$-solvable filtration
\[ \cdots \subset \cF_{-(n+1)} \subset \cF_{-n.5} \subset \cF_{-n} \subset \cdots \subset \cF_{-1} \subset \cF_{-0.5} \subset \cF_0 \subset \cC, \]
a filtration indexed by negative half integers. (Note that we adopt the convention that an indexing set for a filtration $\cF$ is an ordered set $S$ with the property that for $a, b \in S$, $a < b$ implies that $\cF_a \subset \cF_b$.) It was shown by Cochran, Harvey, and Leidy \cite{CHL} that the quotient $\cF_{-n}/\cF_{-n.5}$ is of infinite rank for each non-negative integer $n$. Recent work of Cochran, Harvey, and Horn \cite{CHH} defines the bipolar filtration, again indexed by the negative natural numbers, and they also show that each successive quotient is of infinite rank. Our approach to filtering the concordance group utilizes a finer indexing set than the filtrations of \cite{COT} and \cite{CHH}.

The proof of our result requires the computation of a large family of knot Floer complexes, modulo ``$\varepsilon$-equivalence". While computing the knot Floer complex in general is difficult, we use two properties of knot Floer homology that give us a large class of knots for which the computation simplifies drastically.

Recall that an \emph{$L$-space} is a rational homology $S^3$ for which $\text{rk }\widehat{HF}(Y) = |H_1(Y; \Z)|$, so named because this class of $3$-manifold includes lens spaces. The first property that we use pertains to a family of knots called \emph{$L$-space knots}, that is, knots which admit a positive $L$-space surgery. It is well-known that positive torus knots admit positive lens space surgeries and thus are $L$-space knots. Ozsv\'ath and Szab\'o \cite[Theorem 1.2]{OSlens} show that the knot Floer complex of an $L$-space knot $K$ is completely determined by the Alexander polynomial of $K$. Moreover, Hedden \cite{Heddencable} proves that sufficiently large cables of $L$-space knots are again $L$-space knots. Thus, to understand the knot Floer complexes of torus knots and appropriate cables, it is sufficient to know the knot's Alexander polynomial. It is well-known that the Alexander polynomial of $T_{p,q}$, the $(p, q)$-torus knot, is
\[ \Delta_{T_{p,q}}(t) = \frac{(t^{pq} - 1)(t - 1)}{(t^p - 1)(t^q - 1)} \]
and that the Alexander polynomial of $K_{p,q}$, the $(p, q)$-cable of $K$, is
\[ \Delta_{K_{p,q}}(t) = \Delta_K(t^p) \cdot \Delta_{T_{p,q}}(t), \]
where $p$ denotes the longitudinal winding and $q$ the meridional winding.

The second useful property concerns the behavior of these invariants under basic topological operations. Let $\CFKi(K)^*$ denote the dual of $\CFKi(K)$; we give the precise definition of the dual complex in Section \ref{sec:background}. Ozsv\'ath and Szab\'o \cite{OSknot} show that
\[ \CFKi(K_1 \cs K_2) \simeq \CFKi(K_1) \otimes \CFKi(K_2) \]
and that
\[ \CFKi(-K) \simeq \CFKi(K)^*, \]
allowing us to compute $\CFKi$ for linear combinations of $L$-space knots and their inverses.

With these techniques, we are able to realize a large class of $\Z \oplus \Z$-filtered chain complexes (up to $\varepsilon$-equivalence), and by studying the structure of the group $\cF$, we can understand where in the filtration these knots lie.

The order type in Theorem \ref{thm:ordertype} is almost certainly not a complete description of the indexing set of $\cF$. One reason for this is that we limited ourselves to linear combinations of a small class of $L$-space knots for computational reasons. Moreover, to achieve our result, we needed only to consider connected sums of at most two $L$-space knots, and the $L$-space knots in question were always cables of torus knots. Further work suggests that with linear combinations of iterated torus knots, a richer order type is possible. An interesting question to consider is whether linear combinations of non-$L$-space knots would further enlarge the order type.

The results of \cite{Homsmooth} defined various numerical concordance invariants associated to $\CFKi(K)$ that, in a sense, are a refinement of the Ozsv\'ath-Szab\'o $\tau$ invariant \cite{OS4ball}. This paper studies such invariants in more depth, giving a better understanding of the relationship between these invariants and the structure of the concordance group.

\begin{org}
We begin in Section \ref{sec:background} with the necessary background on knot Floer homology, totally ordered groups, and $L$-space knots, including definitions of the invariant $\varepsilon$ and the group $\cF$. We proceed to prove algebraic results about $\cF$ (Sections \ref{sec:tensor} and \ref{sec:ordering}) and the existence of a certain family of elements in $\cF$ (Section \ref{sec:floer}) through direct computation. In Section \ref{sec:archimedean}, we find knots that allow us to apply our preceding lemmas to understand the order type of $\cF$, which leads to the proof of Theorem \ref{thm:ordertype}. We work with coefficients in $\F = \Z/2\Z$ throughout.
\end{org}

\begin{ack}
The ideas for this paper began during the Summer 2011 Topology REU at Columbia University, which was partially funded by NSF grant DMS-0739392. The second author was partially supported by NSF grant DMS-1307879. The authors would like to thank the other participants in the group, Vivian Josie Bailey and Chun Ye, for their interest in the project, and the organizers of the program for providing the opportunity to work together. The authors would also like to thank the referee for many helpful suggestions.
\end{ack}

\section{Background} \label{sec:background}

\subsection{The knot Floer complex and concordance}
We begin with the necessary background on knot Floer homology, as defined in \cite{OSknot} and \cite{Rknot}. To a knot $K$, we associate a $\Z \oplus \Z$-filtered, $\Z$-graded chain complex over $\F[U, U^{-1}]$, where $U$ is a formal variable. The $\Z$-grading is called the Maslov, or homological, grading. We denote this complex by $\CFKi(K)$, and the filtered chain homotopy type of $\CFKi(K)$ is an invariant of the knot $K$. The ordering on $\Z \oplus \Z$ is given by $(i, j) \leq (i'\!, j')$ if $i \leq i'$ and $j \leq j'$.

The differential, $\partial$, decreases the homological grading by one and respects the $\Z \oplus \Z$-filtration. Multiplication by $U$ shifts the $\Z$-grading by two and decreases the $\Z \oplus \Z$-filtration by $(1, 1)$. Connected sum of knots corresponds to tensor product of their respective chain complexes. That is,
\[ \CFKi(K_1 \cs K_2) \simeq \CFKi(K_1) \otimes_{\F[U, U^{-1}]} \CFKi(K_2). \]
Taking the reverse of the mirror image of a knot corresponds to taking the dual of its knot Floer complex. That is,
\[ \CFKi(-K) \simeq \CFKi(K)^*, \]
where $\CFKi(K)^*$ denotes the dual of $\CFKi(K)$, i.e., $\text{Hom}_{\F[U, U^{-1}]}(\CFKi(K), \F[U, U^{-1}])$. The complex $\CFKi(K)$ is filtered chain homotopic to the complex obtained by interchanging $i$ and $j$.

A basis $\{x_k\}$ over $\F[U, U^{-1}]$ for a filtered chain complex $C$ is a \emph{filtered basis} if $\{U^n \cdot x_k \mid U^n \cdot x_k \in C_{i,j},\, n \in \Z\}$ is a basis over $\F$ for the subcomplex $C_{i,j}$ for all $i, j \in \Z$, where $C_{i,j}$ denotes the $(i, j)^\text{th}$-filtered subcomplex. In this paper, we will often perform a filtered change of basis, producing a new filtered basis from an old one. Given a filtered basis $\{x_k\}$, we can produce a new filtered basis $\{x'_k\}$, where
\[ x'_k = \begin{cases}[1] x_k + x_\ell &\text{if } k = n \\ x_k &\text{otherwise} \end{cases} \]
for some $n$ and $\ell$ such that the filtration level of $x_\ell$ is less than or equal to that of $x_n$. In other words, one may replace a basis element with itself plus elements of lesser or equal filtration level. We will often omit the prime from the new basis and denote this change of basis by
\[ x_n \to x_n + x_\ell, \qquad x_k \to x_k,\  k \neq n. \]

To better understand $\Z \oplus \Z$-filtered chain complexes, it is convenient to depict them in the $(i, j)$-plane, where the $(i, j)$-coordinates depict the filtration level. The $\Z$-grading is suppressed from this picture. We consider the generators over $\F[U, U^{-1}]$. We place an element $U^n \cdot x$ at the lattice point $(i-n, j-n)$, where $(i, j)$ is the filtration level of $x$. We use arrows to describe the differential; if $U^n \cdot y$ appears with non-zero coefficient in $\partial x$, then we draw an arrow from $x$ to $U^n \cdot y$. Since the differential respects the $\Z \oplus \Z$-filtration, the arrows will necessarily point (non-strictly) to the left and down. Up to filtered chain homotopy, one may assume that the differential will strictly decrease the filtration \cite[Lemma 4.5]{Rknot}, and indeed, that will be the case for all of the complexes we consider. Moreover, for all of the complexes under consideration in this paper, there exists a basis where each arrow connects elements of the same $U$-degree. This is because the knots in this paper are all linear combinations of $L$-space knots, which are described in Section \ref{sec:Lspace}. Thus, it is sufficient to consider a single copy of each generator, rather than all of the $U$-translates. At times, it will be convenient to consider only the part of $\partial$ that preserves the $j$- or $i$-filtration level. We use $\partial^\horz$ and $\partial^\vert$, respectively, to denote these.

The subquotient of $\CFKi(K)$ consisting of the $i = 0$ column yields the complex $\widehat{CF}(S^3)$, and so the homology of the $i = 0$ column (or in fact, any column, up to a grading shift) is isomorphic to $\F$. Similarly, the homology of any row is also isomorphic to $\F$.

The picture for $\CFKi(K)^*$ is closely related to the picture for $\CFKi(K)$; one simply reverses the direction of each arrow, as well as both filtrations. (In practice, this may be accomplished by turning the page upside down and reversing the directions of all of the arrows.)

A basis $\{x_i\}$ over $\F[U, U^{-1}]$ for $\CFKi(K)$ is called \emph{vertically simplified} if for each basis element $x_i$, exactly one of the following holds:
\begin{itemize}
	\item There is a unique incoming vertical arrow into $x_i$.
	\item There is a unique outgoing vertical arrow from $x_i$.
	\item There are no vertical arrows entering or leaving $x_i$.
\end{itemize}
Note that since the homology of a column is $\F$, there is a unique basis element of a vertically simplified basis with no incoming or outgoing vertical arrows, called the \emph{vertically distinguished element}. The analogous definition can be made for a \emph{horizontally simplified} basis. By \cite[Proposition 11.52]{LOT}, one may always choose a basis which is vertically simplified, or if one prefers, horizontally simplified.

Given a vertically simplified basis, consider the subquotient complex associated to the $i = 0$ column. The $j$-coordinate of the vertically distinguished element in this column is a concordance invariant, defined by Ozsv\'ath and Szab\'o in \cite{OS4ball} and denoted $\tau(K)$.

While it remains unknown whether a simultaneously vertically and horizontally simplified basis always exists in general, we are able to find such a basis for the complexes under consideration in this paper. Moreover, one may always find a horizontally simplified basis where one of the basis elements, say $x_0$, is the distinguished element of some \emph{vertically} simplified basis \cite[Lemmas 3.2 and 3.3]{Homcable}. The $\{-1, 0, 1\}$-valued concordance invariant $\varepsilon$ can be defined in terms of such a basis.

\begin{definition} \label{def:epsilon}
The invariant $\varepsilon(K)$ is defined in terms of the above basis for $\CFKi(K)$ as follows:
\begin{enumerate}
	\item $\varepsilon(K) = 1$ if there is a unique incoming horizontal arrow into $x_0$.
	\item $\varepsilon(K) = -1$ if there is a unique outgoing horizontal arrow from $x_0$.
	\item $\varepsilon(K) = 0$ if there are no horizontal arrows entering or leaving $x_0$.
\end{enumerate}
\end{definition}

\noindent To emphasize that $\varepsilon$ is in fact an invariant of a bifiltered chain complex, we may at times write $\varepsilon(\CFKi(K))$, rather than $\varepsilon(K)$. Alternatively, the invariant $\varepsilon$ can be defined in terms of the (non-)vanishing of certain cobordism maps on $\widehat{HF}$, as in \cite[Definition 3.1]{Homsmooth}.

\begin{proposition}[{\cite[Proposition 3.6]{Homcable}}]
The following are properties of $\varepsilon(K)$:
\begin{enumerate}
	\item If $K$ is smoothly slice, then $\varepsilon(K) = 0$.
	\vspace{5pt}
	\item $\varepsilon(-K) = -\varepsilon(K)$.
	\vspace{5pt}
	\item	\begin{enumerate}
		\item If $\varepsilon(K) = \varepsilon(K')$, then $\varepsilon(K \cs K') = \varepsilon(K) = \varepsilon(K')$.
		\item If $\varepsilon(K) = 0$, then $\varepsilon(K \cs K') = \varepsilon(K')$.
		\end{enumerate}
\end{enumerate}
\end{proposition}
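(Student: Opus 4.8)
\medskip
\noindent\textbf{Proof proposal.}
The plan is to work from the combinatorial description of $\varepsilon$ in Definition~\ref{def:epsilon}, after one reformulation: if $\{x_i\}$ is a horizontally simplified basis in which $x_0$ is the distinguished element of some vertically simplified basis, then since $\{x_i\}$ is horizontally simplified exactly one of the following holds, and these are precisely the cases $\varepsilon=-1$, $+1$, $0$ of Definition~\ref{def:epsilon}: a unique horizontal arrow leaves $x_0$, i.e.\ $\partial^\text{horz}x_0\neq 0$; a unique horizontal arrow enters $x_0$, i.e.\ $x_0$ is hit by $\partial^\text{horz}$; or no horizontal arrow enters or leaves $x_0$. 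Granting that $\varepsilon$ does not depend on the chosen basis, each item becomes a local statement about the horizontal arrows at a distinguished element, which I would read off using the duality and K\"unneth descriptions of $\partial$.

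For \eqref{item:mirror}: dualize such a basis $\{x_i\}$ for $\CFKi(K)$ to a basis $\{x_i^*\}$ of $\CFKi(K)^*\simeq\CFKi(-K)$. Dualizing reverses every arrow and both filtrations, hence carries horizontal arrows to horizontal arrows and vertical arrows to vertical arrows, reversing only their orientations; so $\{x_i^*\}$ is again horizontally simplified, $x_0^*$ is the distinguished element of the dual vertically simplified basis, and a horizontal arrow into $x_0$ becomes one out of $x_0^*$ (and conversely), while ``no horizontal arrow'' persists. Thus $\varepsilon(-K)=-\varepsilon(K)$.

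For \eqref{item:connectedsum}: fix such bases $\{x_i\}$, $\{y_j\}$ for $\CFKi(K)$, $\CFKi(K')$ and work in $\CFKi(K)\otimes\CFKi(K')\simeq\CFKi(K\,\#\,K')$, whose differential is $\partial\otimes 1+1\otimes\partial$ and whose vertical part is $\partial^\text{vert}\otimes 1+1\otimes\partial^\text{vert}$. By the K\"unneth theorem over $\F$, $x_0\otimes y_0$ is a $\partial^\text{vert}$-cycle generating the vertical homology; in the tensor product of the vertically simplified bases having $x_0$, resp.\ $y_0$, distinguished, it carries no vertical arrows (none leaves since $\partial^\text{vert}x_0=\partial^\text{vert}y_0=0$, none enters since neither $x_0$ nor $y_0$ is hit vertically). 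I would then argue that, performing filtered changes of basis never forced to modify $x_0\otimes y_0$, one obtains a vertically simplified basis with $x_0\otimes y_0$ as its distinguished element and, separately, a horizontally simplified basis containing $x_0\otimes y_0$. Its horizontal arrows come from
\[ \partial^\text{horz}(x_0\otimes y_0)=(\partial^\text{horz}x_0)\otimes y_0 + x_0\otimes(\partial^\text{horz}y_0). \]
If $\varepsilon(K)=\varepsilon(K')=-1$, both summands are nonzero, so $\partial^\text{horz}(x_0\otimes y_0)\neq 0$; one filtered change of basis makes the right-hand side a single basis element, so a unique horizontal arrow leaves $x_0\otimes y_0$ and $\varepsilon(K\,\#\,K')=-1$. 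The case $\varepsilon(K)=\varepsilon(K')=1$ then follows by applying this to $-K,-K'$ and using \eqref{item:mirror} together with $-(K\,\#\,K')=(-K)\,\#\,(-K')$. If $\varepsilon(K)=0$, then $\partial^\text{horz}x_0=0$ and $x_0$ is not hit horizontally; the first fact reduces the display to $\partial^\text{horz}(x_0\otimes y_0)=x_0\otimes(\partial^\text{horz}y_0)$, and the second shows (using that in a horizontally simplified basis $\partial^\text{horz}$ sends each basis element to a single basis element or to zero, never to a $U$-translate of $x_0$) that the $x_0$-component of $\partial^\text{horz}$ on any chain $\sum a_{ij}\,x_i\otimes y_j$ equals $x_0\otimes\partial^\text{horz}(\sum_j a_{0j}\,y_j)$; hence $x_0\otimes y_0$ supports, resp.\ is hit by, resp.\ has no horizontal arrow exactly as $y_0$ does, which is \eqref{item:connectedsum}(b). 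Item \eqref{item:connectedsum}(a) follows by combining the $\varepsilon=-1$ computation, \eqref{item:mirror}, and \eqref{item:connectedsum}(b).

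Finally, for \eqref{item:slice} I would instead use the equivalent definition of $\varepsilon$ in terms of the (non-)vanishing of the maps on $\widehat{HF}$ attached to the relevant cobordisms, as in \cite[Definition~3.1]{Homsmooth}: a concordance induces isomorphisms compatible with those maps, so $\varepsilon$ is a concordance invariant, and $\varepsilon$ of the unknot is $0$ because $\CFKi(\text{unknot})\simeq\F[U,U^{-1}]$ has trivial differential; hence $\varepsilon(K)=0$ whenever $K$ is slice. The main obstacle is the bookkeeping hidden in \eqref{item:connectedsum}: one must verify that the tensor-product bases can be brought to horizontally simplified (and to vertically simplified) form by filtered changes of basis that leave $x_0\otimes y_0$ --- and the single basis element it maps to or is hit by --- fixed and carrying exactly the predicted arrows, i.e.\ that no subsequent elimination step is forced to disturb them. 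This is finite but delicate, and is most cleanly carried out by running the algorithm behind \cite[Proposition~11.52]{LOT} with those elements frozen.
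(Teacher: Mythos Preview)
The paper does not prove this proposition at all; it is quoted verbatim from \cite[Proposition~3.6]{Homcables} and used as a black box. So there is no in-paper argument to compare your proposal against.

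As for your sketch on its own merits: the arguments for \eqref{item:mirror} and \eqref{item:slice} are essentially correct and standard. The real issue, which you yourself flag, is in \eqref{item:connectedsum}. Knowing that $\partial^{\mathrm{horz}}(x_0\otimes y_0)\neq 0$ is not by itself enough to conclude $\varepsilon=-1$: Definition~\ref{def:epsilon} requires a \emph{horizontally simplified} basis in which some element is the distinguished element of a vertically simplified basis, and you must show that the simplification process can be run without destroying the relevant arrow pattern at $x_0\otimes y_0$. Concretely, when you eliminate pairs to horizontally simplify, a later step could a priori add a multiple of $x_0\otimes y_0$ to another element (or vice versa) in a way that changes its horizontal incidences; ``freezing'' $x_0\otimes y_0$ in the algorithm of \cite[Proposition~11.52]{LOT} is not automatic and needs justification. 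Similarly, in part (b) your computation of the ``$x_0$-component'' of $\partial^{\mathrm{horz}}$ shows that $x_0\otimes y_0$ is hit horizontally in the \emph{tensor} basis exactly when $y_0$ is, but the tensor basis is not horizontally simplified, and the invariant is defined only after simplification. This is precisely why the original proof in \cite{Homcables} works instead with the cobordism-map formulation of $\varepsilon$ (the one you invoke for \eqref{item:slice}), where the connected-sum behavior follows from naturality of the maps rather than from delicate basis manipulations. Your outline is plausible but, as written, the gap in \eqref{item:connectedsum} is genuine.
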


\noindent Notice that if $K_1$ and $K_2$ are concordant, then $\varepsilon(\CFKi(K_1) \otimes \CFKi(K_2)^*) = 0$, motivating the following definition.

\begin{definition}
Two bifiltered chain complexes $C_1$ and $C_2$ are \emph{$\varepsilon$-equivalent}, denoted $\sim_\varepsilon$, if
\[ \varepsilon(C_1 \otimes C_2^*) = 0. \]
\end{definition}

Recall that the concordance group $\mathcal{C}$ is obtained as a quotient of the monoid of knots under connected sum by the equivalence relation of concordance. In a similar manner, chain complexes under tensor product form a monoid, and using the idea of $\varepsilon$-equivalence, we can obtain a group.

Consider the monoid $(M, \otimes_{\F[U, U^{-1}]})$ of bifiltered chain complexes up to filtered chain homotopy such that:
\begin{itemize}
	\item The underlying module is a free $\F[U, U^{-1}]$ module.
	\item The total homology of the complex is isomorphic to $\F[U, U^{-1}]$.
	\item The vertical homology of the complex is isomorphic to $\F[U, U^{-1}]$.
	\item The complex obtained by interchanging $i$ and $j$ is filtered chain homotopic to the original complex.
\end{itemize}

\begin{definition}
The group $\cF_\textup{alg}$ is defined to be
\[ \cF_\textup{alg} = \big( M, \otimes \big)/\sim_\varepsilon. \]
\end{definition}

\noindent We denote the group operation of $\cF_\text{alg}$ by $+$ and the identity by $0$. We may also consider the subgroup of $\cF_\text{alg}$ generated by complexes that are realized as $\CFKi(K)$ for some knot $K \subset S^3$.

\begin{definition}
The group $\cF$ is
\[ \cF = \big( \{\CFKi(K) \mid K \subset S^3\}, \otimes \big)/\sim_\varepsilon. \]
\end{definition}

\noindent Clearly $\cF \subseteq \cF_\text{alg}$ since $\{\CFKi(K)\} \subseteq M$. It is known that $\{\CFKi(K)\} \neq M$, since there does not exist an $L$-space knot $K$ with $a_1(K) > 1$ \cite[Theorem 2.3]{Rlens}; see Section \ref{sec:notation} below for the definition of $a_1$. However, it is an open question whether $\cF = \cF_\text{alg}$.

It is clear from the definition of $\cF$ and properties of $\CFKi$ that we obtain a group homomorphism
\[ \cC \to \cF \]
by sending $[K]$ to $[\CFKi(K)]$. Calling this map $\phi$, notice that $\cF \cong \cC/\text{ker }\phi = \cC/\{[K] \mid \varepsilon(K) = 0\}$. For ease of notation, we write
\[ \llbracket K \rrbracket \]
to denote $[\CFKi(K)]$. Note that $-\llbracket K \rrbracket = \llbracket -K \rrbracket$ and $\llbracket \text{unknot} \rrbracket = 0$.

One of the advantages of this approach is that the group $\cF$ has a rich algebraic structure. In particular, $\cF$ is totally ordered, with the ordering given by
\[ [\CFKi(K_1)] > [\CFKi(K_2)] \iff \varepsilon(\CFKi(K_1) \otimes \CFKi(K_2)^*) = 1. \]
By considering the behavior of $\varepsilon$ under connected sum, it follows that this total ordering is well-defined.

\subsection{Totally ordered groups}
Two totally ordered sets $S_1$ and $S_2$ are \emph{order isomorphic} if there exists a bijection $S_1 \to S_2$ such that both the bijection and its inverse are order-preserving. The order equivalence class of $S$ is called the \emph{order type} of $S$.

Given a totally ordered abelian group $G$, one can naturally define a notion of absolute value, i.e., for any $g \in G$,
\[ |g| = \begin{cases}[1] g &\text{if } g \geq \text{id}_G \\ -g &\text{otherwise}. \end{cases} \]
Two elements $g$ and $h$ of a totally ordered abelian group $G$ are said to be \emph{Archimedean equivalent}, denoted $\sim_\text{Ar}$, if there exist $m, n \in \N$ such that
\[ m \cdot |g| > |h| \qquad \text{and} \qquad n \cdot |h| > |g|. \]
The set of Archimedean equivalence classes of $G$ inherits an ordering from the group, and the order type of this set is called the \emph{coarse order type} of the group.

Let $[g]_\text{Ar}$ denote the Archimedean equivalence class of $g$. If $[h]_\text{Ar} < [g]_\text{Ar}$, then $n \cdot |h| < |g|$ for all $n \in \N$, and we write
\[ |h| \ll |g|. \]
(If one restricts oneself to only positive elements in the group, then the absolute value signs may be omitted.) For positive $\llbracket K \rrbracket, \llbracket J \rrbracket \in \cF$, note that $\llbracket K \rrbracket \gg \llbracket J \rrbracket \iff \varepsilon(K \cs -nJ) = 1$ for all $n \in \N$.

A totally ordered group inherits a natural filtration, with the indexing set given by the coarse order type of the group. Given an Archimedean equivalence class, choose a representative $g$, and consider the subgroup
\[ H_g = \{h \in G \mid [h]_\text{Ar} \leq [g]_\text{Ar}\}. \]
Indeed, it follows from the definition of Archimedean equivalence that the set $H_g$ is closed under the group operation, and it is clear that if $h \in H_g$, then the inverse of $h$ is as well. The filtration now also follows from the definition of Archimedean equivalence, since $[g_1]_\text{Ar} < [g_2]_\text{Ar}$ implies that $H_{g_1} \subset H_{g_2}$ and that $\Z \subset H_{g_2}/H_{g_1}$, generated by $g_2$.

Applying these tools to the group $\cF$, we obtain a filtration on $\cF$, which we may pull back to give a filtration on $\cC$. The effectiveness of this approach is largely determined by the coarse order type of $\cF$. For instance, the second author showed \cite[Proposition 4.8]{Homsmooth} that the coarse order type of $\cF$ contains $\omega$ as an ordered subset, with $\cF_n := \phi^{-1}\big[ H_{\llbracket T_{n,n+1} \rrbracket} \big]$ giving a filtration on $\cC$ indexed by $\N$. This is precisely the reversed order type of the $n$-solvable and bipolar filtrations. Our goal is to achieve an indexing set with finer order type.

\subsection{$L$-space knots} \label{sec:Lspace}
One of our main tools for computing the knot Floer complex of large families of knots concerns $L$-space knots. Recall that an \emph{$L$-space} $Y$ is a rational homology sphere for which $\text{rk }\widehat{HF}(Y) = |H_1(Y; \Z)|$, and that an \emph{$L$-space knot} is a knot on which some positive surgery is an $L$-space. In \cite[Theorem 1.2]{OSlens}, Ozsv\'ath and Szab\'o show that if a knot $K$ admits a positive $L$-space surgery, then its knot Floer complex is completely determined by the Alexander polynomial of $K$. In particular, if $K$ is an $L$-space knot, then the Alexander polynomial of $K$ is of the form
\[ \Delta_K(t) = \sum_{i=0}^{2m} (-1)^it^{n_i} \]
for a positive integer $m$ and some strictly increasing sequence of $n_i \in \Z_{\geq 0}$ satisfying the symmetry requirement that
\[ n_i + n_{2m-i} = 2g(K), \]
where $g(K)$ is the genus of $K$ and we have normalized the Alexander polynomial to have a constant term and no negative exponents, i.e., $n_0 = 0$.

The sequence of $n_i$ determines the knot Floer complex of $K$. A filtered basis over $\F[U, U^{-1}]$ for $\CFKi(K)$ is given by $\{x_i\}$, $i = 0, \ldots, 2m$, with the following differentials:
\[ \partial x_i = \begin{cases}[1] x_{i-1} + x_{i+1} &i \text{ odd} \\ 0 &i \text{ even}, \end{cases} \]
where the arrow from $x_i$ to $x_{i-1}$ is horizontal of length $n_i-n_{i-1}$, and the arrow from $x_i$ to $x_{i+1}$ is vertical of length $n_{i+1}-n_i$. See Figure \ref{fig:T_3,4} for an example.

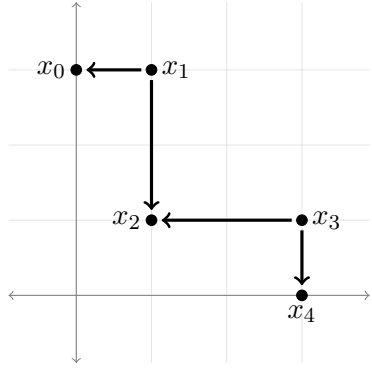
\begin{figure}[htb]
\centering
\begin{tikzpicture}
	\draw [step=1, black!10!white, very thin] (-0.9, -0.9) grid (3.9, 3.9);
	\draw [gray, thin, <->] (-0.9, 0) -- (3.9, 0);
	\draw [gray, thin, <->] (0, -0.9) -- (0, 3.9);
	\filldraw (0, 3) circle (2pt) node[] (x0){};
	\filldraw (1, 3) circle (2pt) node[] (x1){};
	\filldraw (1, 1) circle (2pt) node[] (x2){};
	\filldraw (3, 1) circle (2pt) node[] (x3){};
	\filldraw (3, 0) circle (2pt) node[] (x4){};
	\draw [very thick, <-] (x0) -- (x1);
	\draw [very thick, <-] (x2) -- (x1);
	\draw [very thick, <-] (x2) -- (x3);
	\draw [very thick, <-] (x4) -- (x3);
	\node [left] at (x0) {$x_0$};
	\node [right] at (x1) {$x_1$};
	\node [left] at (x2) {$x_2$};
	\node [right] at (x3) {$x_3$};
	\node [below] at (x4) {$x_4$};
\end{tikzpicture}
\caption{A basis for $\CFKi(T_{3,4})$, where $T_{3,4}$ denotes the $(3, 4)$-torus knot. The Alexander polynomial of $T_{3,4}$ is $\Delta_{T_{3,4}}(t) = 1-t+t^3-t^5+t^6$.}
\label{fig:T_3,4} \end{figure}

In the next section, we introduce special notation for denoting certain families of bifiltered chain complexes, with the above result taking the form
\begin{equation} \label{eqn:staircase}
\llbracket K \rrbracket = [(n_i - n_{i-1})_{i=1}^m]
\end{equation}
in that notation. Notice that in the $(i, j)$-plane, this complex has the appearance of a ``staircase". (In fact, having such a staircase complex is a necessary and sufficient condition for being an $L$-space knot \cite{OSlens}.) Such complexes will play a key role in this paper.

\subsection{Notation for chain complex classes} \label{sec:notation}
In this section, we define some notation that will be useful to describe the complexes of interest.

\begin{definition} \label{def:Ctype}
A bifiltered complex $C$ that represents an element in $\cF_\textup{alg}$ is of type $(a_1, \ldots, a_m)$ if it is doubly-filtered chain homotopy equivalent to a direct sum
\[ C_\red \oplus C_A, \]
where $C_A$ is acyclic, $C_\red$ has no acyclic summands, and $C_\red$ admits a simultaneously vertically and horizontally simplified basis $\{x_0, \ldots, x_{2m}\}$ with the following vertical and horizontal differentials:
\[ \partial^\horz x_i = \begin{cases}[1] x_{i-1} &i \textup{ odd},\, a_i > 0 \\ x_{i+1} &i \textup{ even},\, i \neq 2m,\, a_{i+1} < 0 \\ 0 &\textup{otherwise} \end{cases} \qquad \text{and} \qquad \partial^\vert x_i = \begin{cases}[1] x_{i+1} &i \textup{ odd},\, a_{i+1} > 0 \\ x_{i-1} &i \textup{ even},\, i \neq 0,\, a_i < 0 \\ 0 &\textup{otherwise}, \end{cases} \]
where the arrow between $x_i$ and $x_{i-1}$ is of length $|a_i|$ and $a_i = a_{2m+1-i}$ for $i = m+1, \ldots, 2m$.
\end{definition}

\begin{remark}
In the above definition, the differentials on $C_\red$ necessarily imply that $C_A$ is acyclic and that $C_\red$ has no acyclic summands, because of the symmetry and rank properties of $C$.
\end{remark}

The sequence $a_1, \ldots, a_{2m}$ indicates the signed lengths of the arrows encountered as we trace $C_\red$ from the vertically distinguished element to the horizontally distinguished element along horizontal and vertical arrows. For $i$ odd, the arrow between $x_i$ and $x_{i-1}$ is horizontal, and $a_i$ is positive if it is outgoing from $x_i$, otherwise negative. For $i$ even, the arrow between $x_i$ and $x_{i-1}$ is vertical, and $a_i$ is positive if it is incoming to $x_i$, otherwise negative. The sign convention is chosen such that $L$-space knots will have all positive $a_i$. For example, the complex in Figure \ref{fig:T_3,4} is of type $(1, 2)$.

Note that if $C$ is a representative of the type $(a_1, \ldots, a_m)$, denoted $C \in (a_1, \ldots, a_m)$, then $C$ must have at least $2m+1$ basis elements. If $C$ has exactly $2m+1$ basis elements (i.e., $C \simeq C_\red$), then we say that $C$ is \emph{reduced}. Given $C_1 \in (a_1, \ldots, a_m)$ with basis $\{x_i\}$, $i = 0, \ldots, 2m$, we know that $C_1$ must be the reduced representative, and we assume the $x_i$ are labeled in the order described in Definition \ref{def:Ctype}. Moreover, if $C_2 \in (b_1, \ldots, b_n)$ with basis $\{y_j\}$, $j = 0, \ldots, 2n$, then $C_1 \otimes C_2$ naturally has (unsimplified) basis $\{x_iy_j\}$, where $x_iy_j = x_i \otimes y_j$.

The complex $C_1$ is a \emph{staircase complex} if all $a_i$ are positive, or if all $a_i$ are negative. By Lemma \ref{lem:typeepsilon} below, a staircase complex is $\varepsilon$-equivalent to a complex with no diagonal arrows, namely the reduced representative of $(a_1, \ldots, a_m)$ with exactly $2m+1$ generators. If each $a_i > 0$, i.e., $C_1$ is a staircase complex, we have
\begin{equation} \label{eqn:differential}
\partial x_i = \begin{cases}[1] x_{i-1} + x_{i+1} &i \text{ odd} \\ 0 &i \text{ even} \end{cases} \qquad \text{and} \qquad \partial x_i^* = \begin{cases}[1] x_{i+1}^* + x_{i-1}^* &i \text{ even},\, i \neq m \mp m \\ 0 &i \text{ odd} \\ x_{i\pm1}^* &i = m \mp m. \end{cases}
\end{equation}
Using $\fl(x_i)$ to denote the filtration level of $x_i$, we have $\fl(x_0) = (0, \tau(C_1))$. It is further clear that
\begin{equation} \label{eqn:fl}
\fl(x_i) = \begin{cases}[1] \fl(x_{i-1}) + (a_i, 0) &i \text{ odd} \\ \fl(x_{i-1}) - (0, a_i) &i \text{ even} \end{cases} \qquad \text{and} \qquad \fl(x_i^*) = -\fl(x_i).
\end{equation}
By definition, the operation of tensor product on chain complexes gives us
\begin{align}
\label{eqn:mixed differential} \partial(x_iy_j) &= \partial(x_i)y_j + x_i\partial(y_j) \\
\label{eqn:mixed fl} \fl(x_iy_j) &= \fl(x_i) + \fl(y_j).
\end{align}
Notice $\fl(x_0^*) = -(0, \tau(C_1)) = (0, \tau(C_1^*))$ and $\fl(x_0y_0) = (0, \tau(C_1)) + (0, \tau(C_2)) = (0, \tau(C_1 \otimes C_2))$.

The following lemma shows that the type of a complex determines an $\varepsilon$-equivalence class. That is, if $C$ is a representative of the type $T$ and the $\varepsilon$-equivalence class $[C]$, then $T \subseteq [C]$.

\begin{lemma} \label{lem:typeepsilon}
If two complexes $C_1$ and $C_2$ are both of type $(a_1, \ldots, a_m)$, then the complexes are $\varepsilon$-equivalent.
\end{lemma}

\begin{proof}
Without loss of generality, we may assume that $a_1$ is positive. Let $\{x_0, \ldots, x_{2m}\}$ be a basis as in Definition \ref{def:Ctype} for the reduced summand of $C_1$ and similarly $\{y_0, \ldots, y_{2m}\}$ a basis for the reduced summand of $C_2^*$, where $C_2^*$ denotes the dual of $C_2$; that is, if $\{z_0, \ldots, z_{2m}\}$ is a basis for the reduced summand of $C_2$ as in Definition \ref{def:Ctype}, then $y_i = z_i^*$. In particular, there is a horizontal arrow of length $a_1$ from $x_1$ to $x_0$ and a horizontal arrow of length $a_1$ from $y_0$ to $y_1$. More generally, if there is a vertical (respectively horizontal) arrow from $x_i$ to $x_j$, then there is a vertical (respectively horizontal) arrow from $y_j$ to $y_i$. (Note the order of the subscripts on $x$ and $y$.)

We need to show that $\varepsilon(C_1 \otimes C_2^*) = 0$. In light of \cite[Definition 3.1]{Homsmooth}, it is sufficient to show that there exists a class in $C_1 \otimes C_2^*$ that is non-zero in the homology of $C\{\max(i, j-\tau) = 0\}$ and $C\{\min(i, j-\tau) = 0\}$. See \cite[Section 3]{Homsmooth} for the definition of these complexes. We claim that
\[ u = \sum_{i=0}^{2m} x_iy_i \]
is such an element. Note that each term in the above sum is in the same filtration level.

We first consider the horizontal homology. We will show that $u$ is in the kernel of $\partial^\horz$. Suppose that $x_jy_i$ appears in $\partial^\horz u$. Then either there is a horizontal arrow from $x_i$ to $x_j$ or from $y_j$ to $y_i$, and $j = i\pm1$. But as noted above, there is a horizontal arrow from $x_i$ to $x_j$ exactly when there is a horizontal arrow from $y_j$ to $y_i$. In particular, $x_jy_i$ appears in $\partial^\horz u$ exactly twice: once from $\partial^\horz x_iy_i$ and once from $\partial^\horz x_jy_j$. In this situation, we also have that $\partial^\horz x_iy_j = x_iy_i + x_jy_j$.

We now show that $u$ is not in the image of $\partial^\horz$. From the last sentence of the preceding paragraph, we have that for each $i = 0, \ldots, m-1$, the sum $x_{2i}y_{2i} + x_{2i+1}y_{2i+1}$ is in the image of $\partial^\horz$. It follows that
\[ \smashoperator{\sum_{i=0}^{2m-1}} x_iy_i = u - x_{2m}y_{2m} \]
is in the image of $\partial^\horz$. Since $x_{2m}$ and $y_{2m}$ were each the distinguished horizontal element of their respective bases, the element $x_{2m}y_{2m}$ is not in the image of $\partial^\horz$, so neither is $u$.

Similarly, it follows that $u$ also generates the vertical homology (where the role of the element $x_{2m}y_{2m}$ is now played by $x_0y_0$). Moreover, similar arguments show that $u$ is non-zero in $H_*(C\{\max(i, j-\tau) = 0\})$ and $H_*(C\{\min(i, j-\tau) = 0\})$. We conclude that $\varepsilon(C_1 \otimes C_2^*) = 0$, implying that $C_1 \sim_\varepsilon C_2$.
\end{proof}

It follows that for $C$ of type $(a_1, \ldots, a_m)$, we may denote the element $[C]$ of $\cF_\text{alg}$ by
\[ [a_1, \ldots, a_m]. \]
Note that $-[a_1, \ldots, a_m] = [-a_1, \ldots, -a_m]$ and $[\;] = 0$. We will sometimes use nested iterators to write our sequences. For instance, given sequences $(a_{i,j})_{j=1}^{n_i}$ indexed by $j$ for $i = 1, \ldots, m$, we can form the sequence
\[ ((a_{i,j})_{j=1}^{n_i})_{i=1}^m = (a_{1,1}, a_{1,2}, \ldots a_{1,n_1}, a_{2,1}, a_{2,2}, \ldots a_{2,n_2}, \ldots, a_{m,1}, a_{m,2}, \ldots a_{m,n_m}). \]
We will also write $((a_j)_{j=1}^n)^m$ to denote the sequence given by $a_1, \ldots, a_n$ repeated $m$ times, i.e.,
\[ ((a_j)_{j=1}^n)^m = \overbrace{(a_1, a_2, \ldots, a_n, a_1, a_2, \ldots, a_n, \ldots, a_1, a_2, \ldots, a_n)}^m. \]
Note that not every sequence of integers $a_1, \ldots, a_m$ corresponds to an element $[a_1, \ldots, a_m]$ of $\cF_\text{alg}$. For example, $[1, -2]$ does not admit a chain complex representative, as there is no collection of diagonal arrows that makes $\partial^2 = 0$. See Figure \ref{fig:F_alg} for two examples.

\begin{figure}[htb]
\centering
\subfigure[]{%
\begin{tikzpicture}
	\draw [step=1, black!10!white, very thin] (-0.9, -0.9) grid (3.9, 3.9);
	\draw [gray, thin, <->] (-0.9, 0) -- (3.9, 0);
	\draw [gray, thin, <->] (0, -0.9) -- (0, 3.9);
	\filldraw (0, 1.1) circle (2pt) node[] (x0){};
	\filldraw (1.1, 1.1) circle (2pt) node[] (x1){};
	\filldraw (1.1, 0) circle (2pt) node[] (x2){};
	\filldraw (0, 0.9) circle (2pt) node[] (y0){};
	\filldraw (0.9, 0.9) circle (2pt) node[] (y1){};
	\filldraw (0.9, 0) circle (2pt) node[] (y2){};
	\filldraw (0, 0) circle (2pt) node[] (y3){};
	\draw [very thick, <-] (x0) -- (x1);
	\draw [very thick, <-] (x2) -- (x1);
	\draw [very thick, <-] (y0) -- (y1);
	\draw [very thick, <-] (y2) -- (y1);
	\draw [very thick, <-] (y3) -- (y2);
	\draw [very thick, <-] (y3) -- (y0);
	\node [left] at (0, 1.2) {$x_0$};
	\node [above] at (x1) {$x_1$};
	\node [below] at (1.25, 0) {$x_2$};
	\node [left] at (0, 0.8) {$y_0$};
	\node [] at (0.65, 0.65) {$y_1$};
	\node [below] at (0.75, 0) {$y_2$};
	\node [] at (-0.25, -0.25) {$y_3$};
\end{tikzpicture}}\hspace{40pt}%
\subfigure[]{%
\begin{tikzpicture}
	\draw [step=1, black!10!white, very thin] (-0.9, -0.9) grid (3.9, 3.9);
	\draw [gray, thin, <->] (-0.9, 0) -- (3.9, 0);
	\draw [gray, thin, <->] (0, -0.9) -- (0, 3.9);
	\filldraw (0, 2) circle (2pt) node[] (x0){};
	\filldraw (2.9, 2) circle (2pt) node[] (x1){};
	\filldraw (2.9, 3.1) circle (2pt) node[] (x2){};
	\filldraw (1, 3.1) circle (2pt) node[] (x3){};
	\filldraw (1, 1) circle (2pt) node[] (x4){};
	\filldraw (3.1, 1) circle (2pt) node[] (x5){};
	\filldraw (3.1, 2.9) circle (2pt) node[] (x6){};
	\filldraw (2, 2.9) circle (2pt) node[] (x7){};
	\filldraw (2, 0) circle (2pt) node[] (x8){};
	\draw [very thick, <-] (x0) -- (x1);
	\draw [very thick, <-] (x1) -- (x2);
	\draw [very thick, <-] (x3) -- (x2);
	\draw [very thick, <-] (x4) -- (x3);
	\draw [very thick, <-] (x4) -- (x5);
	\draw [very thick, <-] (x5) -- (x6);
	\draw [very thick, <-] (x7) -- (x6);
	\draw [very thick, <-] (x8) -- (x7);
	\draw [very thick, <-] (x0) -- (x3);
	\draw [very thick, <-] (x4) -- (x1);
	\draw [very thick, <-] (x4) -- (x7);
	\draw [very thick, <-] (x8) -- (x5);
	\node [left] at (x0) {$x_0$};
	\node [below] at (2.9, 1.95) {$x_1$};
	\node [above] at (x2) {$x_2$};
	\node [above] at (x3) {$x_3$};
	\node [below] at (x4) {$x_4$};
	\node [right] at (x5) {$x_5$};
	\node [right] at (x6) {$x_6$};
	\node [left] at (x7) {$x_7$};
	\node [below] at (x8) {$x_8$};
\end{tikzpicture}}
\caption{Left, a basis for $\CFKi(5_2)$, showing $[\CFKi(5_2)] = [1]$. Right, a basis for a reduced representative of $[3, -1, -2, 2]$.}
\label{fig:F_alg} \end{figure}
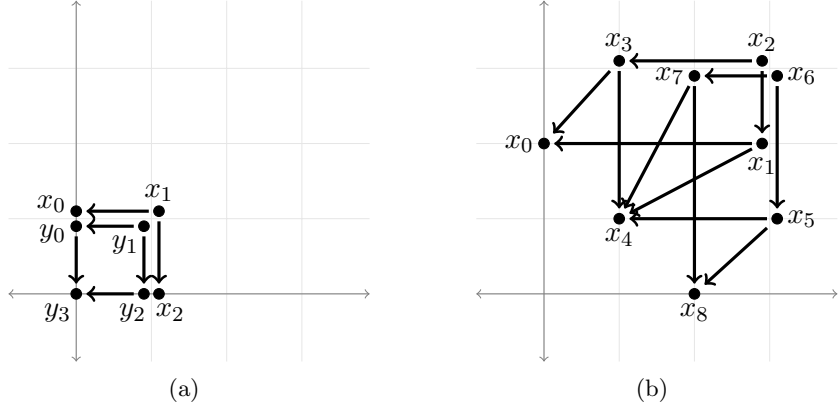

At times, it will be convenient to have some $a_i$ equal to zero. To this end, we make the formal identifications:
\begin{itemize}
	\item $[a_1, \ldots, a_m] = [a_1, \ldots, a_m, 0]$.
	\item $[a_1, \ldots, a_j, \ldots, a_m] = [a_1, \ldots, a_j-k, 0, k, \ldots, a_m] \quad \text{for } \min(0, a_j) \leq k \leq \max(0, a_j)$.
\end{itemize}
In particular, this allows $(\ref{eqn:staircase})$ to remain valid even when the sequence of $n_i$ defining $\Delta_K(t)$ is only non-strictly increasing. It follows that given $[a_1, \ldots, a_m] \in \cF_\text{alg}$ with $a_i = 0$, we may use $[\ldots, a_{i-1}, 0, a_{i+1}, \ldots] = [\ldots, a_{i-1}+a_{i+1}, \ldots]$ to remove the zero entry.

Finally, note that the concordance invariants $\tau(K)$ of \cite{OS4ball} and $\varepsilon(K)$, $a_1(K)$, and $a_2(K)$ of \cite{Homsmooth} are well-defined invariants of elements in $\cF_\text{alg}$. For $[C] = [a_1, \ldots, a_m] \in \cF_\text{alg}$, we have
\[ \tau(C) = \sum_{i=1}^m a_i \qquad \text{and} \qquad \varepsilon(C) = \begin{cases}[1] \text{sgn}(a_1) &\text{if } m > 0 \\ 0 &\text{otherwise}, \end{cases} \]
and if $\varepsilon(C) = 1$,
\[ a_1(C) = a_1 \qquad \text{and} \qquad a_2(C) = \begin{cases}[1] a_2 &\text{if } a_2 > 0 \\ \text{undefined} &\text{otherwise}. \end{cases} \]

\section{Tensor Products of Staircase Complexes} \label{sec:tensor}

For the calculations that follow in Sections \ref{sec:tensor} and \ref{sec:ordering}, we suppress the $U$-translates from this picture, which is always possible for the complexes under consideration here, given an appropriate choice of basis. That is, the complexes of interest are all of the form $C = C' \otimes_\F \F[U, U^{-1}]$, where $C'$ is a doubly filtered, finitely-generated complex over $\F$. Thus $C_1 \otimes_{\F[U, U^{-1}]} C_2 = (C_1' \otimes C_2') \otimes_\F \F[U, U^{-1}]$.

In this section, we prove two lemmas on the group operation of $\cF_\text{alg}$. Our approach is to take the tensor product of two complexes, both reduced with $\varepsilon = 1$, then vertically and horizontally simplify the basis of the product to determine the reduced representative of its $\varepsilon$-equivalence class.

\begin{lemma} \label{lem:box}
Let $a_i,\, b_j > 0$ for $i = 1, \dots, m$ and $j = 1, \dots, n$. If $m$ is even and $\max\{a_i \mid i \textup{ odd}\} \leq b_j \leq \min\{a_i \mid i \textup{ even}\}$, then
\[ [a_1, a_2, \ldots, a_m] + [b_1, b_2, \ldots, b_n] = [a_1, a_2, \ldots, a_m, b_1, b_2, \ldots, b_n]. \]
\end{lemma}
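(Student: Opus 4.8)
The plan is to work with the reduced staircase representatives $C_1 \in [a_1,\dots,a_m]$ with basis $\{x_i\}$, $i=0,\dots,2m$, and $C_2 \in [b_1,\dots,b_n]$ with basis $\{y_j\}$, $j=0,\dots,2n$, and to exhibit inside $C_1\otimes C_2$ a sub-staircase on $2(m+n)+1$ generators whose consecutive arrow lengths are exactly $a_1,\dots,a_m,b_1,\dots,b_n$ (and, by the diagonal symmetry, the mirrored tail), together with a splitting $C_1\otimes C_2 = C_{\mathrm{red}}\oplus C_A$ with $C_A$ acyclic. Since all $a_i,b_j>0$, both $C_1$ and $C_2$ are honest staircases (no diagonal arrows), so the differential on $C_1\otimes C_2$ is given by \eqref{eqn:mixed differential} together with \eqref{eqn:differential}, and the filtration levels by \eqref{eqn:mixed fl}, \eqref{eqn:fl}. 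The natural candidate for the distinguished staircase path is: start at $x_0y_0$, run along the $x$-staircase keeping the $y$-coordinate at $y_0$ (i.e.\ $x_0y_0, x_1y_0, x_2y_0,\dots,x_my_0$, but only the first $m$ steps, through $x_{m}y_0$ since $m$ is even and $x_m$ is the "top right" corner's partner), then turn and run along the $y$-staircase (i.e.\ $x_m y_0, x_m y_1, \dots, x_m y_n$). Actually the cleaner description, matching the labelling convention in the excerpt: the path visits $x_0y_0, x_1y_0,\dots$ up through $x_my_0$ using the full $x$-staircase, then $x_my_1,\dots,x_my_n$ using the full $y$-staircase; one checks the arrow from $x_iy_0$ to $x_{i\pm1}y_0$ has the same length $|a_i|$ as in $C_1$ and the arrow from $x_my_j$ to $x_my_{j\pm1}$ has length $|b_j|$ as in $C_2$, so the concatenated path has arrow-length sequence $a_1,\dots,a_m,b_1,\dots,b_n$ as desired.

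The substance of the argument is the change of basis that peels off the acyclic complement. In $C_1\otimes C_2$ there are generators $x_iy_j$ with both $i$ and $j$ "interior" (i.e.\ not on the chosen path), and for these $\partial(x_iy_j) = \partial(x_i)y_j + x_i\partial(y_j)$ mixes horizontal and vertical contributions. The hypothesis $\max\{a_i : i \text{ odd}\} \le b_j \le \min\{a_i : i \text{ even}\}$ is exactly what is needed to make the relevant cancellations "one-sided": it forces the geometry of the staircase so that at each generator off the path, one of the two incident arrows dominates (is no shorter than) the other in the appropriate direction, which lets us do a filtered change of basis (replacing a basis element by itself plus an element of $\le$ filtration level, as allowed in the excerpt) to cancel arrows in pairs and split off $C_A$ without disturbing the chosen path. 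Concretely, I would set up the change of basis on the $y$-simplified then $x$-simplified basis, group the off-path generators into $2\times 2$ or "square" blocks (whence the name "box" lemma — see Figure \ref{fig:F_alg}(a), where the $\{y_i\}$ form exactly such a box), verify each block is acyclic, and verify no arrows survive between the blocks and the path.

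Let me list the steps in order. First, write down explicitly the bases, differentials \eqref{eqn:differential}, and filtration levels \eqref{eqn:fl} of $C_1$ and $C_2$, and hence of $C_1\otimes C_2$ via \eqref{eqn:mixed differential}--\eqref{eqn:mixed fl}. Second, identify the path $P = \{x_0y_0, x_1y_0,\dots,x_my_0,x_my_1,\dots,x_my_n\}$ and check, using the inequality hypothesis, that restricted to $P$ the arrows are precisely those of a reduced staircase with parameters $a_1,\dots,a_m,b_1,\dots,b_n$; in particular check the turning corner $x_my_0$ behaves correctly (here one uses that $m$ even makes $x_m$ a "local" element with the right incoming/outgoing pattern, and that $a_m \le b_1 \le a_1$ controls the corner). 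Third, perform the filtered change of basis on the complement: for each off-path $x_iy_j$, use the hypothesis to decide which incident arrow to cancel against, producing new basis elements that split $C_1\otimes C_2 \cong C_{\mathrm{red}} \oplus C_A$. Fourth, check $C_A$ is acyclic — this follows because the total homology of $C_1\otimes C_2$ has rank one (Künneth, each factor has rank-one total homology) and $C_{\mathrm{red}}$ already accounts for it, so $H_*(C_A)=0$. Fifth, invoke $C_{\mathrm{red}}\oplus C_A \sim_\varepsilon C_{\mathrm{red}}$ from the excerpt to conclude. The main obstacle is step three: organizing the change of basis on the two-dimensional grid of off-path generators so that the cancellations are consistent (no generator is used twice, no new unwanted arrows are created among the surviving path generators), and pinning down exactly where each of the three inequalities $a_i \le b_j$ ($i$ odd), $b_j \le a_i$ ($i$ even), and the $m$-even condition is used. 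I expect the bookkeeping to be cleanest if one first horizontally simplifies in the $x$-direction for fixed $y_j$ with $j$ odd, then vertically simplifies in the $y$-direction, tracking which arrows the hypothesis guarantees are absent.
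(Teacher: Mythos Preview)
Your plan matches the paper's proof: the distinguished staircase is exactly $S=\{x_iy_0:i<m\}\cup\{x_my_j\}\cup\{x_iy_{2n}:i>m\}$ (your path together with its diagonal reflection), and the off-path generators are grouped into $2\times2$ ``boxes'' $B_{i,j}$ indexed by odd $i,j$, which a single filtered change of basis $x_{i-1}y_j\to x_{i-1}y_j+x_iy_{j-1}$, $x_iy_{j+1}\to x_iy_{j+1}+x_{i+1}y_j$, $x_{i-1}y_{j+1}\to x_{i-1}y_{j+1}+x_{i+1}y_{j-1}$ splits off as acyclic summands. One small correction: the inequality hypothesis is \emph{not} needed at the corner $x_my_0$ --- the staircase structure on $S$ is automatic from the differentials once $m$ is even (so that $\partial x_m=0$ and the turn is clean); the hypothesis enters only in your step three to make the box substitutions filtered, and there the relevant chain is $a_i\le b_j\le a_{i+1}$ for $i$ odd (in particular $a_1\le b_1\le a_m$, not $a_m\le b_1\le a_1$ as you wrote).
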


\begin{proof}
For $C_1 \in (a_1, \ldots, a_m)$ with basis $\{x_i\}$, $i = 0, \ldots, 2m$, and $C_2 \in (b_1, \ldots, b_n)$ with basis $\{y_j\}$, $j = 0, \ldots, 2n$, we prove that $C_1 \otimes C_2$ is of type $T = (a_1, \ldots, a_m, b_1, \ldots, b_n)$. Define the sets
\begin{align*}
S &= \{x_iy_0 \mid i < m\} \cup \{x_my_j \mid 0 \leq j \leq 2n\} \cup \{x_iy_{2n} \mid i > m\} \\
B_{i,j} &= \begin{cases}[1] \{x_iy_j, x_{i-1}y_j, x_iy_{j+1}, x_{i-1}y_{j+1}\} &i, j \text{ odd},\, i < m \\ \{x_iy_j, x_{i+1}y_j, x_iy_{j-1}, x_{i+1}y_{j-1}\} &i, j \text{ odd},\, i > m. \end{cases}
\end{align*}
Supposing $m$ is even, $\bigcup_{i<m,j} B_{i,j} = \{x_iy_j \mid i < m,\, j > 0\}$ and $\bigcup_{i>m,j} B_{i,j} = \{x_iy_j \mid i > m,\, j < 2n\}$. Therefore, $\{x_iy_j\} = S \cup \{\bigcup_{i,j} B_{i,j}\}$. Note that $S$ is the elements $\{x_iy_0 \mid i < m\} \cup \{x_my_j \mid j \leq n\}$ along with their reflection about the diagonal. Given the differentials on these elements,
\[ \partial(x_iy_0) = \begin{cases}[1] x_{i-1}y_0 + x_{i+1}y_0 &i \text{ odd} \\ 0 &i \text{ even} \end{cases} \qquad \text{and} \qquad \partial(x_my_j) = \begin{cases}[1] x_my_{j-1} + x_my_{j+1} &j \text{ odd} \\ 0 &j \text{ even}, \end{cases} \]
and their filtration levels,
\[ \fl(x_iy_0) = \begin{cases}[1] \fl(x_{i-1}y_0) + (a_i, 0) &i \text{ odd} \\ \fl(x_{i-1}y_0) - (0, a_i) &i \text{ even} \end{cases} \qquad \text{and} \qquad \fl(x_my_j) = \begin{cases}[1] \fl(x_my_{j-1}) + (b_j, 0) &j \text{ odd} \\ \fl(x_my_{j-1}) - (0, b_j) &j \text{ even}, \end{cases} \]
it is clear that $S$ spans a subcomplex isomorphic to the reduced representative of type $T$. We show that each of the remaining $mn$ sets of elements $B_{i,j}$ forms an acyclic summand under a filtered change of basis. These summands look like boxes---see Figure \ref{fig:boxes} for an example. Since $B_{i,j} \to B_{2m-i,2n-j}$ when $x_iy_j \to x_{2m-i}y_{2n-j}$, the proofs for $i < m$ and $i > m$ are the same under the transformation $x_iy_j \to x_{2m-i}y_{2n-j}$, $a_i \to a_{2m+1-i}$, and $b_j \to b_{2n+1-j}$ by the diagonal symmetry of $C_1 \otimes C_2$. It therefore suffices to redefine the basis for $i = 0, \ldots, m-1$.

\begin{figure}[htb]
\centering
\subfigure[]{%
	\begin{tikzpicture}
	\draw [step=1, black!10!white, very thin] (0, 0) grid (6.6, 6.6);
	\draw [gray, thin, ->] (0, 0) -- (6.6, 0);
	\draw [gray, thin, ->] (0, 0) -- (0, 6.6);
	\filldraw (0, 6) circle (2pt) node[] (x0y0){};
	\filldraw (1, 6) circle (2pt) node[] (x1y0){};
	\filldraw (1, 2.9) circle (2pt) node[] (x2y0){};
	\filldraw (4, 2.9) circle (2pt) node[] (x3y0){};
	\filldraw (4, 2) circle (2pt) node[] (x4y0){};
	\filldraw (2, 6) circle (2pt) node[] (x0y1){};
	\filldraw (3.1, 6) circle (2pt) node[] (x1y1){};
	\filldraw (3.1, 3.1) circle (2pt) node[] (x2y1){};
	\filldraw (6, 3.1) circle (2pt) node[] (x3y1){};
	\filldraw (6, 2) circle (2pt) node[] (x4y1){};
	\filldraw (2, 4) circle (2pt) node[] (x0y2){};
	\filldraw (2.9, 4) circle (2pt) node[] (x1y2){};
	\filldraw (2.9, 1) circle (2pt) node[] (x2y2){};
	\filldraw (6, 1) circle (2pt) node[] (x3y2){};
	\filldraw (6, 0) circle (2pt) node[] (x4y2){};
	\draw [very thick, <-] (x0y0) -- (x1y0);
	\draw [very thick, <-] (x2y0) -- (x1y0);
	\draw [very thick, <-] (x2y0) -- (x3y0);
	\draw [very thick, <-] (x4y0) -- (x3y0);
	\draw [very thick, <-] (x0y1) -- (x1y1);
	\draw [very thick, <-] (x2y1) -- (x1y1);
	\draw [very thick, <-] (x2y1) -- (x3y1);
	\draw [very thick, <-] (x4y1) -- (x3y1);
	\draw [very thick, <-] (x0y2) -- (x1y2);
	\draw [very thick, <-] (x2y2) -- (x1y2);
	\draw [very thick, <-] (x2y2) -- (x3y2);
	\draw [very thick, <-] (x4y2) -- (x3y2);
	\draw [very thick, <-] (x2y0) -- (x1y0);
	\draw [very thick, <-] (x0y0) .. controls (0.4, 5.6) and (1.6, 5.6) .. (x0y1);
	\draw [very thick, <-] (x0y2) .. controls (1.6, 4.4) and (1.6, 5.6) .. (x0y1);
	\draw [very thick, <-] (x1y0) .. controls (1.4, 6.4) and (2.6, 6.4) .. (x1y1);
	\draw [very thick, <-] (x1y2) .. controls (2.5, 4.4) and (2.7, 5.6) .. (x1y1);
	\draw [very thick, <-] (x2y0) .. controls (1.4, 3.3) and (2.6, 3.5) .. (x2y1);
	\draw [very thick, <-] (x2y2) .. controls (3.3, 1.4) and (3.5, 2.6) .. (x2y1);
	\draw [very thick, <-] (x3y0) .. controls (4.4, 2.5) and (5.6, 2.7) .. (x3y1);
	\draw [very thick, <-] (x3y2) .. controls (6.4, 1.4) and (6.4, 2.6) .. (x3y1);
	\draw [very thick, <-] (x4y0) .. controls (4.4, 1.6) and (5.6, 1.6) .. (x4y1);
	\draw [very thick, <-] (x4y2) .. controls (5.6, 0.4) and (5.6, 1.6) .. (x4y1);
	\node [left] at (x0y0) {$x_0y_0$};
	\node [above] at (0.6, 6) {$x_1y_0$};
	\node [left] at (x2y0) {$x_2y_0$};
	\node [right] at (x3y0) {$x_3y_0$};
	\node [right] at (x4y0) {$x_4y_0$};
	\node [below] at (2.3, 5.95) {$x_0y_1$};
	\node [right] at (x1y1) {$x_1y_1$};
	\node [] at (3.66, 3.33) {$x_2y_1$};
	\node [above] at (x3y1) {$x_3y_1$};
	\node [left] at (x4y1) {$x_4y_1$};
	\node [left] at (2, 3.95) {$x_0y_2$};
	\node [right] at (3.05, 4) {$x_1y_2$};
	\node [below] at (x2y2) {$x_2y_2$};
	\node [right] at (6, 0.9) {$x_3y_2$};
	\node [right] at (6.05, 0.25) {$x_4y_2$};
\end{tikzpicture}}%
\subfigure[]{%
	\begin{tikzpicture}
	\draw [step=1, black!10!white, very thin] (0, 0) grid (6.6, 6.6);
	\draw [gray, thin, ->] (0, 0) -- (6.6, 0);
	\draw [gray, thin, ->] (0, 0) -- (0, 6.6);
	\filldraw (0, 6) circle (2pt) node[] (x0y0){};
	\filldraw (1, 6) circle (2pt) node[] (x1y0){};
	\filldraw (1, 3) circle (2pt) node[] (x2y0){};
	\filldraw (3, 3) circle (2pt) node[] (x2y1){};
	\filldraw (3, 1) circle (2pt) node[] (x2y2){};
	\filldraw (6, 1) circle (2pt) node[] (x3y2){};
	\filldraw (6, 0) circle (2pt) node[] (x4y2){};
	\filldraw (3, 6) circle (2pt) node[] (x1y1){};
	\filldraw (2, 6) circle (2pt) node[] (x0y1){};
	\filldraw (3, 4) circle (2pt) node[] (x1y2){};
	\filldraw (2, 4) circle (2pt) node[] (x0y2){};
	\filldraw (6, 3) circle (2pt) node[] (x3y1){};
	\filldraw (6, 2) circle (2pt) node[] (x4y1){};
	\filldraw (4, 3) circle (2pt) node[] (x3y0){};
	\filldraw (4, 2) circle (2pt) node[] (x4y0){};
	\draw [very thick, <-] (x0y0) -- (x1y0);
	\draw [very thick, <-] (x2y0) -- (x1y0);
	\draw [very thick, <-] (x2y0) -- (x2y1);
	\draw [very thick, <-] (x2y2) -- (x2y1);
	\draw [very thick, <-] (x2y2) -- (x3y2);
	\draw [very thick, <-] (x4y2) -- (x3y2);
	\draw [very thick, <-] (x0y1) -- (x1y1);
	\draw [very thick, <-] (x1y2) -- (x1y1);
	\draw [very thick, <-] (x0y2) -- (x0y1);
	\draw [very thick, <-] (x0y2) -- (x1y2);
	\draw [very thick, <-] (x4y1) -- (x3y1);
	\draw [very thick, <-] (x3y0) -- (x3y1);
	\draw [very thick, <-] (x4y0) -- (x4y1);
	\draw [very thick, <-] (x4y0) -- (x3y0);
	\node [left] at (x0y0) {$x_0y_0$};
	\node [above] at (0.6, 6) {$x_1y_0$};
	\node [left] at (x2y0) {$x_2y_0$};
	\node [below] at (2.55, 3) {$x_2y_1$};
	\node [below] at (x2y2) {$x_2y_2$};
	\node [below] at (5.55, 1) {$x_3y_2$};
	\node [right] at (6.05, 0.25) {$x_4y_2$};
	\node [right] at (x1y1) {$x_1y_1$};
	\node [above] at (2.15, 6) {$x_0y_1{+}x_1y_0$};
	\node [right] at (x1y2) {$x_1y_2{+}x_2y_1$};
	\node [below] at (2.15, 4) {$x_0y_2{+}x_2y_0$};
	\node [above] at (x3y1) {$x_3y_1$};
	\node [below] at (6.05, 2) {$x_4y_1{+}x_3y_2$};
	\node [above] at (x3y0) {$x_3y_0{+}x_2y_1$};
	\node [below] at (x4y0) {$x_4y_0{+}x_2y_2$};
\end{tikzpicture}}
\caption{$C_1 \otimes C_2$ with, left, basis $\{x_iy_j\}$ and, right, simplified basis $S \cup B'_{1,1} \cup B'_{3,1}$, where $C_1 \in (1, 3)$ has basis $\{x_0, x_1, x_2, x_3, x_4\}$ and $C_2 \in (2)$ has basis $\{y_0, y_1, y_2\}$.}
\label{fig:boxes} \end{figure}
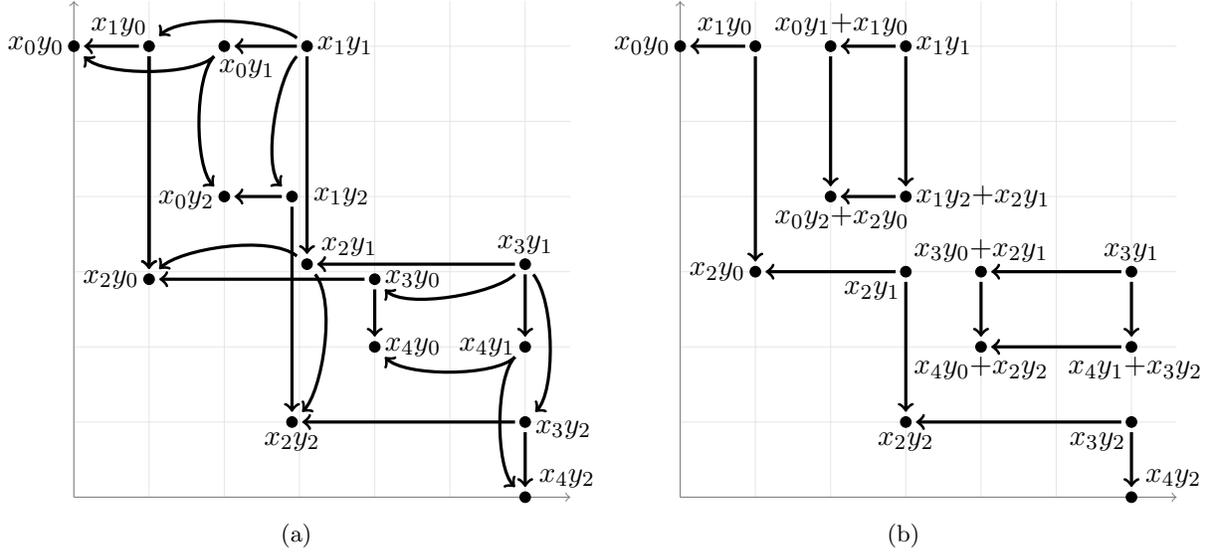

For all odd $i < m$ and all odd $j$, produce the new basis
\begin{equation} \label{eqn:cob xy}
x_{i-1}y_j \to x_{i-1}y_j + x_iy_{j-1}, \qquad x_iy_{j+1} \to x_iy_{j+1} + x_{i+1}y_j, \qquad x_{i-1}y_{j+1} \to x_{i-1}y_{j+1} + x_{i+1}y_{j-1}.
\end{equation}
It follows that $B_{i,j} \to B'_{i,j}$, where $B'_{i,j} = \{x_iy_j, x_{i-1}y_j + x_iy_{j-1}, x_iy_{j+1} + x_{i+1}y_j, x_{i-1}y_{j+1} + x_{i+1}y_{j-1}\}$.

First, we check that our chosen basis respects the filtration. By (\ref{eqn:fl}, \ref{eqn:mixed fl}) with $i$ and $j$ odd,
\begin{equation} \label{eqn:fl xy}
\begin{aligned}
\fl(x_{i-1}y_j) &= (\fl(x_i) - (a_i, 0)) + (\fl(y_{j-1}) + (b_j, 0)) \\
&= \fl(x_iy_{j-1}) + (b_j-a_i, 0) \\
\fl(x_iy_{j+1}) &= (\fl(x_{i+1}) + (0, a_{i+1})) + (\fl(y_j) - (0, b_{j+1})) \\
&= \fl(x_{i+1}y_j) + (0, a_{i+1}-b_{j+1}) \\
\fl(x_{i-1}y_{j+1}) &= (\fl(x_{i+1}) + (-a_i, a_{i+1})) + (\fl(y_{j-1}) + (b_j, -b_{j+1})) \\
&= \fl(x_{i+1}y_{j-1}) + (b_j-a_i, a_{i+1}-b_{j+1}).
\end{aligned}
\end{equation}
We require $\fl(x_{i-1}y_j) \geq \fl(x_iy_{j-1})$, $\fl(x_iy_{j+1}) \geq \fl(x_{i+1}y_j)$, and $\fl(x_{i-1}y_{j+1}) \geq \fl(x_{i+1}y_{j-1})$. By (\ref{eqn:fl xy}), this is equivalent to $a_i \leq b_j$ and $b_{j+1} \leq a_{i+1}$ for $i$ and $j$ odd, which in turn is equivalent to $a_i \leq b_j \leq a_{i+1}$ for $i$ odd and $j = 1, \dots, n$, which is true by hypothesis.

Second, we check that the differential on $B'_{i,j}$ gives an acyclic summand. Using (\ref{eqn:differential}, \ref{eqn:mixed differential}) with $i$ and $j$ odd and $\Z/2\Z$ coefficients,
\begin{equation}
\begin{aligned}
\partial(x_iy_j) &= x_{i-1}y_j + x_{i+1}y_j + x_iy_{j-1} + x_iy_{j+1} \\
&= (x_{i-1}y_j + x_iy_{j-1}) + (x_iy_{j+1} + x_{i+1}y_j) \\
\partial(x_{i-1}y_j + x_iy_{j-1}) &= (x_{i-1}y_{j-1} + x_{i-1}y_{j+1}) + (x_{i-1}y_{j-1} + x_{i+1}y_{j-1}) \\
&= x_{i-1}y_{j+1} + x_{i+1}y_{j-1} \\
\partial(x_iy_{j+1} + x_{i+1}y_j) &= (x_{i-1}y_{j+1} + x_{i+1}y_{j+1}) + (x_{i+1}y_{j-1} + x_{i+1}y_{j+1}) \\
&= x_{i-1}y_{j+1} + x_{i+1}y_{j-1} \\
\partial(x_{i-1}y_{j+1} + x_{i+1}y_{j-1}) &= 0.
\end{aligned}
\end{equation}
That is, both $(x_{i-1}y_j + x_iy_{j-1})$ and $(x_iy_{j+1} + x_{i+1}y_j)$ have one incoming arrow from $x_iy_j$ and one outgoing arrow to $(x_{i-1}y_{j+1} + x_{i+1}y_{j-1})$, joining the former to the latter. As there are no outgoing arrows from $S$ to $\{x_iy_j\} \backslash S$ or $B'_{i,j}$ to $\{x_iy_j\} \backslash B_{i,j}$, neither are there incoming arrows to $B'_{i,j}$. We conclude that each $B'_{i,j}$ is a basis for an acyclic subcomplex that splits off as a direct summand.
\end{proof}

Two examples that follow directly from inductive application of Lemma \ref{lem:box} are $n[a] = [(a)^n]$ (using the relation $2[a] = [a, a]$) and $n[a_1, a_2] = [(a_1, a_2)^n]$ if $a_1 \leq a_2$. As a point of constrast, we note without proof that $n[a_1, a_2] = [(c_i)_{i=1}^{2n}]$ with $(c_i)_{i=1}^{4n} = (a_1, a_2, a_2, a_1)^n$ if $a_1 \geq a_2$.

\begin{lemma} \label{lem:polygon}
Let $a > 0$ and $c,\, d_\ell,\, p,\, q_\ell \geq 0$ for $\ell = 1, \ldots, r$. If $\min\{q_\ell\} \geq p$ and $\max\{d_\ell\} \leq c$, then
\[ [(1, a)^p\!, 1, a+c] + [((1, a)^{q_\ell}\!, 1, a+d_\ell)_{\ell=1}^r] = [(1, a)^p\!, 1, a+c, ((1, a)^{q_\ell}\!, 1, a+d_\ell)_{\ell=1}^r]. \]
\end{lemma}

\begin{proof}
Set $m = 2p+2$ and $n_\ell = 2\sum_{k=1}^\ell q_k + 2\ell$, where $q_\ell = q_{2r+1-\ell}$. For $C_1 \in ((1, a)^p\!, 1, a+c)$ with basis $\{x_i\}$, $i = 0, \ldots, 2m$, and $C_2 \in ((1, a)^{q_\ell}\!, 1, a+d_\ell)_{\ell=1}^r$ with basis $\{y_j\}$, $j = 0, \ldots, 2n_r$, we prove that $C_1 \otimes C_2$ is of type $T = ((1, a)^p\!, 1, a+c, ((1, a)^{q_\ell}\!, 1, a+d_\ell)_{\ell=1}^r)$. Define the sets
\begin{align*}
S &= \{x_iy_0 \mid i < m\} \cup \{x_my_j \mid 0 \leq j \leq 2n_r\} \cup \{x_iy_{2n_r} \mid i > m\} \\
B_{i,j} &= \begin{cases}[1] \{x_iy_j, x_{i-1}y_j, x_iy_{j+1}, x_{i-1}y_{j+1}\} &i, j \text{ odd},\, i < m,\, j \notin \{n_\ell-1 \mid \ell \leq r\} \\ \{x_iy_j, x_{i+1}y_j, x_iy_{j-1}, x_{i+1}y_{j-1}\} &i, j \text{ odd},\, i > m,\, j \notin \{n_\ell+1 \mid \ell \geq r\} \end{cases} \\
R_j &= \begin{cases}[1] \{x_iy_j, x_iy_{j+1} \mid i < m\} &j = n_\ell-1,\, \ell \leq r \\ \{x_iy_j, x_iy_{j-1} \mid i > m\} &j = n_\ell+1,\, \ell \geq r. \end{cases}
\end{align*}
Reasoning as in the preceding proof, $\{x_iy_j\} = S \cup \{\bigcup_{i,j} B_{i,j}\} \cup \{\bigcup_j R_j\}$, and $S$ spans a subcomplex isomorphic to the reduced representative of type $T$. We show that each of the $m(n_r-2r)$ sets of elements $B_{i,j}$ and $2r$ sets of elements $R_j$ forms an acyclic summand under a filtered change of basis. The former summands look like boxes and the latter look like rectilinear polygons with $4p+4$ sides---see Figure \ref{fig:polygons} for an example. Since $B_{i,j} \to B_{2m-i,2n_r-j}$ and $R_j \to R_{2n_r-j}$ when $x_iy_j \to x_{2m-i}y_{2n_r-j}$, it suffices to redefine the basis for $i = 0, \ldots, m-1$.

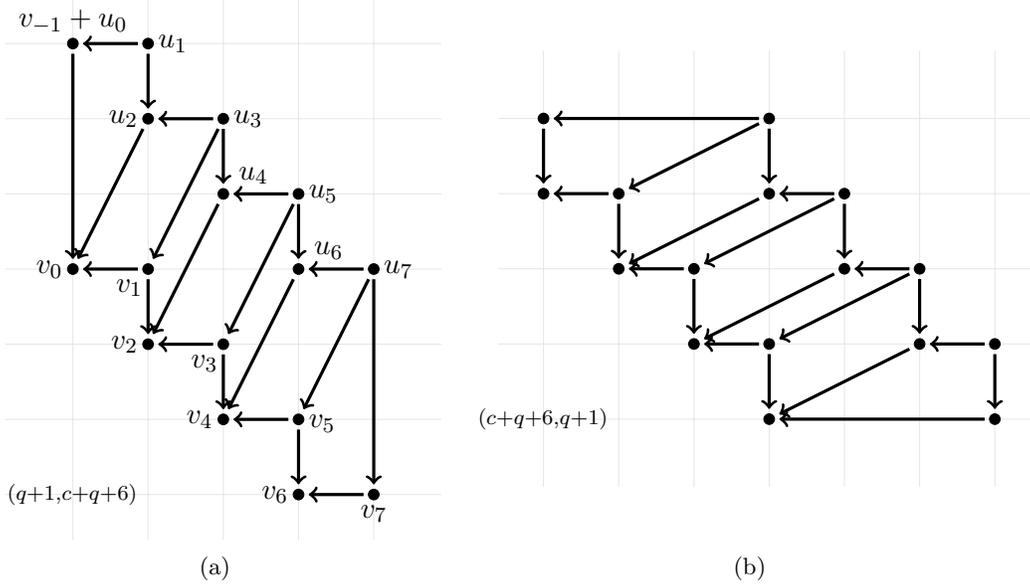
\begin{figure}[htb]
\centering
\subfigure[]{%
\begin{tikzpicture}
	\draw [step=1, black!10!white, very thin] (-0.9, -0.6) grid (4.9, 6.6);
	\filldraw (0, 6) circle (2pt) node[] (u0){};
	\filldraw (1, 6) circle (2pt) node[] (u1){};
	\filldraw (1, 5) circle (2pt) node[] (u2){};
	\filldraw (2, 5) circle (2pt) node[] (u3){};
	\filldraw (2, 4) circle (2pt) node[] (u4){};
	\filldraw (3, 4) circle (2pt) node[] (u5){};
	\filldraw (3, 3) circle (2pt) node[] (u6){};
	\filldraw (4, 3) circle (2pt) node[] (u7){};
	\filldraw (4, 0) circle (2pt) node[] (v7){};
	\filldraw (3, 0) circle (2pt) node[] (v6){};
	\filldraw (3, 1) circle (2pt) node[] (v5){};
	\filldraw (2, 1) circle (2pt) node[] (v4){};
	\filldraw (2, 2) circle (2pt) node[] (v3){};
	\filldraw (1, 2) circle (2pt) node[] (v2){};
	\filldraw (1, 3) circle (2pt) node[] (v1){};
	\filldraw (0, 3) circle (2pt) node[] (v0){};
	\draw [very thick, <-] (u0) -- (u1);
	\draw [very thick, <-] (u2) -- (u1);
	\draw [very thick, <-] (u2) -- (u3);
	\draw [very thick, <-] (u4) -- (u3);
	\draw [very thick, <-] (u4) -- (u5);
	\draw [very thick, <-] (u6) -- (u5);
	\draw [very thick, <-] (u6) -- (u7);
	\draw [very thick, <-] (v7) -- (u7);
	\draw [very thick, <-] (v6) -- (v7);
	\draw [very thick, <-] (v6) -- (v5);
	\draw [very thick, <-] (v4) -- (v5);
	\draw [very thick, <-] (v4) -- (v3);
	\draw [very thick, <-] (v2) -- (v3);
	\draw [very thick, <-] (v2) -- (v1);
	\draw [very thick, <-] (v0) -- (v1);
	\draw [very thick, <-] (v0) -- (u0);
	\draw [very thick, <-] (v0) -- (u2);
	\draw [very thick, <-] (v1) -- (u3);
	\draw [very thick, <-] (v2) -- (u4);
	\draw [very thick, <-] (v3) -- (u5);
	\draw [very thick, <-] (v4) -- (u6);
	\draw [very thick, <-] (v5) -- (u7);
	\node [above] at (u0) {$v_{-1}+u_0$};
	\node [right] at (u1) {$u_1$};
	\node [left] at (u2) {$u_2$};
	\node [right] at (u3) {$u_3$};
	\node [] at (2.4, 4.25) {$u_4$};
	\node [right] at (u5) {$u_5$};
	\node [] at (3.4, 3.25) {$u_6$};
	\node [right] at (u7) {$u_7$};
	\node [below] at (v7) {$v_7$};
	\node [left] at (v6) {$v_6$};
	\node [right] at (3, 0.95) {$v_5$};
	\node [left] at (v4) {$v_4$};
	\node [] at (1.75, 1.75) {$v_3$};
	\node [left] at (v2) {$v_2$};
	\node [] at (0.75, 2.75) {$v_1$};
	\node [left] at (v0) {$v_0$};
	\node [] at (0, 0) {$_{(q+1, c+q+6)}$};
\end{tikzpicture}}\hspace{10pt}%
\subfigure[]{%
\begin{tikzpicture}
	\draw [step=1, black!10!white, very thin] (-0.6, -0.9) grid (6.6, 4.9);
	\filldraw (6, 0) circle (2pt) node[] (x0){};
	\filldraw (6, 1) circle (2pt) node[] (x1){};
	\filldraw (5, 1) circle (2pt) node[] (x2){};
	\filldraw (5, 2) circle (2pt) node[] (x3){};
	\filldraw (4, 2) circle (2pt) node[] (x4){};
	\filldraw (4, 3) circle (2pt) node[] (x5){};
	\filldraw (3, 3) circle (2pt) node[] (x6){};
	\filldraw (3, 4) circle (2pt) node[] (x7){};
	\filldraw (0, 4) circle (2pt) node[] (x8){};
	\filldraw (0, 3) circle (2pt) node[] (x9){};
	\filldraw (1, 3) circle (2pt) node[] (x10){};
	\filldraw (1, 2) circle (2pt) node[] (x11){};
	\filldraw (2, 2) circle (2pt) node[] (x12){};
	\filldraw (2, 1) circle (2pt) node[] (x13){};
	\filldraw (3, 1) circle (2pt) node[] (x14){};
	\filldraw (3, 0) circle (2pt) node[] (x15){};
	\draw [very thick, <-] (x0) -- (x1);
	\draw [very thick, <-] (x2) -- (x1);
	\draw [very thick, <-] (x2) -- (x3);
	\draw [very thick, <-] (x4) -- (x3);
	\draw [very thick, <-] (x4) -- (x5);
	\draw [very thick, <-] (x6) -- (x5);
	\draw [very thick, <-] (x6) -- (x7);
	\draw [very thick, <-] (x8) -- (x7);
	\draw [very thick, <-] (x9) -- (x8);
	\draw [very thick, <-] (x9) -- (x10);
	\draw [very thick, <-] (x11) -- (x10);
	\draw [very thick, <-] (x11) -- (x12);
	\draw [very thick, <-] (x13) -- (x12);
	\draw [very thick, <-] (x13) -- (x14);
	\draw [very thick, <-] (x15) -- (x14);
	\draw [very thick, <-] (x15) -- (x0);
	\draw [very thick, <-] (x15) -- (x2);
	\draw [very thick, <-] (x14) -- (x3);
	\draw [very thick, <-] (x13) -- (x4);
	\draw [very thick, <-] (x12) -- (x5);
	\draw [very thick, <-] (x11) -- (x6);
	\draw [very thick, <-] (x10) -- (x7);
	\draw (0, -1.6);
	\node [] at (0, 0) {$_{(c+q+6, q+1)}$};
\end{tikzpicture}}
\caption{Acyclic summands of $C_1 \otimes C_2$ with bases, left, $R'_{2q+1}$ and, right, $R'_{2q+3}$, where $C_1 \in ((1, 1)^3\!, 1, 1+c)$ and $C_2 \in ((1, 1)^q\!, 1, 3)$ are reduced, $c \geq 2$, and $q \geq 3$.}
\label{fig:polygons} \end{figure}

Let $(a_i)_{i=1}^m = ((1, a)^p\!, 1, a+c)$ and $(b_j)_{j=1}^{n_r} = ((1, a)^{q_\ell}\!, 1, a+d_\ell)_{\ell=1}^r$. For $i$ and $j$ odd, $i < m$, and $j \notin \{n_\ell-1 \mid \ell \leq r\}$, we have $\max\{a_i\} = \min\{b_j\} = 1$ and $\max\{b_{j+1}\} = \min\{a_{i+1}\} = a$. Hence by (\ref{eqn:fl xy}), the basis change (\ref{eqn:cob xy}) is valid for each $B_{i,j}$. Recall that $B'_{i,j}$ is a basis for a box subcomplex.

Define the basis element sums
\begin{equation} \label{eqn:cob uv}
u_{i,j} = \smashoperator{\sum_{k=-1}^{m-i}} x_{i+k}y_{j-k} \qquad \text{and} \qquad v_{i,j} = x_iy_j + x_my_{j+i-m}.
\end{equation}
Supposing $q_\ell \geq p$ for each $\ell$, we may produce the new basis
\begin{equation}
x_iy_j \to u_{i,j},\  i \neq 0, \qquad x_iy_{j+1} \to v_{i,j+1}, \qquad x_0y_j \to u_{0,j} + v_{-1,j+1}
\end{equation}
for all $i < m$ and all $j \in \{n_\ell-1 \mid \ell \leq r\}$. Note here that $u_{0,j} + v_{-1,j+1} = \sum_{k=0}^{m-1} x_ky_{j-k}$. It follows that $R_j \to R'_j$, where $R'_j = \{u_{i,j}, v_{i,j+1}, u_{0,j} + v_{-1,j+1} \mid i < m\} \backslash \{u_{0,j}\}$.

First, we check that our basis respects the filtration. Let $0 < k < m-i$. By (\ref{eqn:fl}, \ref{eqn:mixed fl}) with $i$ even and $j = n_\ell-1$,
\begin{align*}
\fl(x_iy_j) &= (\fl(x_{i-1}) - (0, a)) + (\fl(y_{j+1}) + (0, a+d_\ell)) \\
&= \fl(x_{i-1}y_{j+1}) + (0, d_\ell),\  i \neq 0 \\
\fl(x_{i+k}y_{j-k}) &= \begin{cases}[1] (\fl(x_{i+k-1}) + (1, 0)) + (\fl(y_{j-k+1}) - (1, 0)) &k \text{ odd} \\ (\fl(x_{i+k-1}) - (0, a)) + (\fl(y_{j-k+1}) + (0, a)) &k \text{ even} \end{cases} \\
&= \fl(x_{i+k-1}y_{j-k+1}) \\
\fl(x_my_{j+i-m}) &= (\fl(x_{m-1}) - (0, a+c)) + (\fl(y_{j+i-m+1}) + (0, a)) \\
&= \fl(x_{m-1}y_{j+i-m+1}) - (0, c),\  i \neq 0
\end{align*}
and $\fl(x_my_j) = \fl(x_{m-1}y_{j+1}) - (0, c-d_\ell)$. By (\ref{eqn:fl}, \ref{eqn:mixed fl}) with $i$ odd and $j = n_\ell-1$,
\begin{align*}
\fl(x_iy_j) &= (\fl(x_{i-1}) + (1, 0)) + (\fl(y_{j+1}) + (0, a+d_\ell)) \\
&= \fl(x_{i-1}y_{j+1}) + (1, a+d_\ell) \\
\fl(x_{i+k}y_{j-k}) &= \begin{cases}[1] (\fl(x_{i+k-1}) - (0, a)) + (\fl(y_{j-k+1}) - (1, 0)) &k \text{ odd} \\ (\fl(x_{i+k-1}) + (1, 0)) + (\fl(y_{j-k+1}) + (0, a)) &k \text{ even} \end{cases} \\
&= \fl(x_{i+k-1}y_{j-k+1}) + (-1)^k(1, a) \\
\fl(x_my_{j+i-m}) &= (\fl(x_{m-1}) - (0, a+c)) + (\fl(y_{j+i-m+1}) - (1, 0)) \\
&= \fl(x_{m-1}y_{j+i-m+1}) - (1, a+c).
\end{align*}
Letting $e_i = i \bmod 2$, it follows that
\begin{equation} \label{eqn:fl uv}
\begin{aligned}
\fl(x_iy_j) &= \begin{cases}[1] \fl(x_{i+k}y_{j-k}) + e_{ik}(1, a) \\ \fl(x_{i-1}y_{j+1}) + (0, d_\ell) + e_i(1, a) &i \neq 0 \\ \fl(x_my_{j+i-m}) + (0, c) + e_i(1, a) &i \neq 0 \end{cases} \\
\fl(x_iy_{j+1}) &= \fl(x_my_{j+1+i-m}) + (0, c-d_\ell).
\end{aligned}
\end{equation}
We require $\fl(x_iy_{j+1}) \geq \fl(x_my_{j+1+i-m})$ and $\fl(x_iy_j) \geq \fl(x_{i+k}y_{j-k})$ for each $k$ summed over in $u_{i,j}$ ($i \neq 0$) and $v_{-1,j+1} + u_{0,j}$. By (\ref{eqn:fl uv}), this is equivalent to $0 \leq d_\ell \leq c$, which is true by hypothesis.

Second, we check that the differential on $R'_j$ gives an acyclic summand. Using (\ref{eqn:differential}, \ref{eqn:mixed differential}) with $i$ even and $j$ odd,
\begin{align*}
\partial u_{i,j} &= \partial \smashoperator{\sum_{k=-1}^{m-i}} x_{i+k}y_{j-k} = \partial \smashoperator{\sum_{\substack{k=-1 \\ k\text{ odd}}}^{m-i-1}} (x_{i+k}y_{j-k} + x_{i+k+1}y_{j-k-1}) \\
&= \smashoperator{\sum_{\substack{k=-1 \\ k\text{ odd}}}^{m-i-1}} ((x_{i+k-1}y_{j-k} + x_{i+k+1}y_{j-k}) + (x_{i+k+1}y_{j-k-2} + x_{i+k+1}y_{j-k})) \\
&= \smashoperator{\sum_{\substack{k=-1 \\ k\text{ odd}}}^{m-i-1}} (x_{i+k-1}y_{j-k} + x_{i+k+1}y_{j-k-2}) + \smashoperator{\sum_{\substack{k=-1 \\ k\text{ odd}}}^{m-i-1}} 2x_{i+k+1}y_{j-k} \\
&= (x_{i-2}y_{j+1} + x_my_{j+i-m-1}) + \smashoperator{\sum_{\substack{k=1 \\ k\text{ odd}}}^{m-i-1}} 2x_{i+k-1}y_{j-k} + \smashoperator{\sum_{\substack{k=-1 \\ k\text{ odd}}}^{m-i-1}} 2x_{i+k+1}y_{j-k} \\
&= v_{i-2,j+1} \\
\partial v_{i,j+1} &= \partial(x_iy_{j+1} + x_my_{j+1+i-m}) \\
&= 0.
\end{align*}
Note that we first broke up the sum $u_{i,j}$ into terms with strictly odd or even subscripts to allow application of (\ref{eqn:differential}). Using (\ref{eqn:differential}, \ref{eqn:mixed differential}) with $i$ odd and $j$ odd,
\begin{align*}
\partial u_{i,j} &= \partial \smashoperator{\sum_{k=-1}^{m-i}} x_{i+k}y_{j-k} = \partial \smashoperator{\sum_{\substack{k=0 \\ k\text{ even}}}^{m-i-1}} x_{i+k}y_{j-k} + \partial \smashoperator{\sum_{\substack{k=-1 \\ k\text{ odd}}}^{m-i}} x_{i+k}y_{j-k} \\
&= \smashoperator{\sum_{\substack{k=0 \\ k\text{ even}}}^{m-i-1}} (x_{i+k-1}y_{j-k} + x_{i+k+1}y_{j-k} + x_{i+k}y_{j-k-1} + x_{i+k}y_{j-k+1}) + 0 \\
&= \smashoperator{\sum_{\substack{k=0 \\ k\text{ even}}}^{m-i-1}} (x_{i+k-1}y_{j-k} + x_{i+k}y_{j-k-1}) + \smashoperator{\sum_{\substack{k=0 \\ k\text{ even}}}^{m-i-1}} (x_{i+k+1}y_{j-k} + x_{i+k}y_{j-k+1}) \\
&= -(x_{i-2}y_{j+1} + x_my_{j+i-m-1}) + \smashoperator{\sum_{k=-1}^{m-(i-1)}} x_{i-1+k}y_{j-k} + \smashoperator{\sum_{k=-1}^{m-(i+1)}} x_{i+1+k}y_{j-k} \\
&= v_{i-2,j+1} + u_{i-1,j} + u_{i+1,j} \displaybreak \\
\partial v_{i,j+1} &= \partial(x_iy_{j+1} + x_my_{j+1+i-m}) \\
&= (x_{i-1}y_{j+1} + x_{i+1}y_{j+1}) + (x_my_{j+1+i-m-1} + x_my_{j+1+i-m+1}) \\
&= (x_{i-1}y_{j+1} + x_my_{j+1+i-1-m}) + (x_{i+1}y_{j+1} + x_my_{j+1+i+1-m}) \\
&= v_{i-1,j+1} + v_{i+1,j+1}.
\end{align*}
Abbreviating $u_i = u_{i,j}$ and $v_i = v_{i,j+1}$ for convenience, we have shown that
\begin{equation}
\partial u_i = \begin{cases}[1] v_{i-2} + u_{i-1} + u_{i+1} &i \text{ odd} \\ v_{i-2} &i \text{ even} \end{cases} \qquad \text{and} \qquad \partial v_i = \begin{cases}[1] v_{i-1} + v_{i+1} &i \text{ odd} \\ 0 &i \text{ even}. \end{cases}
\end{equation}
Notice $u_m = v_{m-1}$, $v_m = 0$, and $\partial(v_{-1} + u_0) = v_0$. That is, $R'_j$ is a basis for two staircases, with arrows joining the endpoints $\{v_{-1} + u_0, u_{m-1}\}$ to $\{v_0, v_{m-1}\}$. (Note the diagonal arrows from $u_i$ to $v_{i-2}$ for $i > 1$ ensure $\partial^2 = 0$.) As there are no outgoing arrows from $S$ to $\{x_iy_j\} \backslash S$, $B'_{i,j}$ to $\{x_iy_j\} \backslash B_{i,j}$, or $R'_j$ to $\{x_iy_j\} \backslash R_j$, neither are there incoming arrows to $B'_{i,j}$ or $R'_j$. We conclude that each $B'_{i,j}$ and $R'_j$ is a basis for an acyclic subcomplex that splits off as a direct summand.
\end{proof}

\section{Ordering of Staircase Complex Classes} \label{sec:ordering}

We now study the ordering on $\cF_\text{alg}$. We take the tensor product of a complex with $n$ times the dual of another, both reduced with $\varepsilon = 1$, then partially vertically and horizontally simplify the basis of the product to determine its $\varepsilon$ value.

\begin{lemma}[{\cite[Lemmas 6.3 and 6.4]{Homsmooth}}] \label{lem:order i}
Let $a_i,\, b_j > 0$. If $b_1 > a_1$ or if $b_1 = a_1$ and $b_2 < a_2$, then
\[ [a_1, a_2, \ldots, a_m] \gg [b_1, b_2, \ldots, b_n]. \]
\end{lemma}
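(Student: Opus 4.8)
The plan is to show $\varepsilon\big([a_1,\dots,a_m] + n\cdot[-b_1,\dots,-b_n]\big) = 1$ for every $n \in \N$, which by the characterization of $\gg$ recorded just before the ``$L$-space knots'' subsection is exactly the claim. Write $C_1 \in [a_1,\dots,a_m]$ with basis $\{x_i\}_{i=0}^{2m}$ and let $D$ be a reduced representative of $n\cdot[-b_1,\dots,-b_n] = [-b_1^{(n)}, -b_2^{(n)}, \dots]$ (an $n$-fold connected sum, whose precise form we won't need), with basis $\{y_j\}$; here $\varepsilon(D) = -1$ and $D = C_2^*$ for some staircase $C_2 \in [b_1, b_2, \dots]$. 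I want to compute $\varepsilon$ of $C_1 \otimes D$ directly from Definition \ref{def:epsilon}: partially simplify the basis of $C_1 \otimes D$ so that I can read off whether the vertically distinguished element $x_0 y_0$ (which has an outgoing vertical arrow) has, after simplification, an incoming horizontal arrow, an outgoing horizontal arrow, or neither — giving $\varepsilon = 1, -1, 0$ respectively. The key point is that the vertically distinguished generator of $C_1 \otimes D$ is $x_0 \otimes y_0$: $x_0$ is vertically distinguished in $C_1$, and since $D = C_2^*$ with $C_2$ a staircase starting with a positive step $b_1$, $y_0$ is vertically distinguished in $D$, so the tensor product's vertical homology is generated by $x_0 y_0$.

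The main computation is to track the horizontal arrows at $x_0 y_0$ after a filtered change of basis. By \eqref{eqn:mixed differential}, $\partial^{\mathrm{horz}}(x_0 y_0)$ comes from $\partial^{\mathrm{horz}} x_0$ and $\partial^{\mathrm{horz}} y_0$. In $C_1$, the element $x_0$ has no outgoing horizontal arrow (it is the left end of the staircase: $a_1 > 0$ means the horizontal arrow at $x_1$ is outgoing \emph{into} $x_0$, so $x_0$ \emph{receives} a horizontal arrow of length $a_1$ from $x_1$). In $D = C_2^*$, dualizing the staircase $C_2$ whose first step is the horizontal arrow of length $b_1$ from $x_1^{C_2}$ to $x_0^{C_2}$: in the dual, $y_0 = (x_0^{C_2})^*$ acquires an \emph{outgoing} horizontal arrow of length $b_1$. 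Thus in $C_1 \otimes D$, the generator $x_0 y_0$ has an incoming horizontal arrow (of length $a_1$, from $x_1 y_0$) and an outgoing horizontal arrow (of length $b_1$, to $x_0 y_0'$ where $y_0'$ is the horizontal target of $y_0$ in $D$). The question is which ``wins'' after simplification, and this is exactly where the hypotheses $b_1 > a_1$, or $b_1 = a_1$ with $b_2 < a_2$, enter: I need to do a filtered change of basis that cancels the shorter of these two arrows against a matching arrow, leaving $x_0 y_0$ with a single incoming horizontal arrow.

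Concretely: the incoming horizontal arrow into $x_0 y_0$ originates at $x_1 y_0$, which also has a vertical arrow to $x_2 y_0$ (since $x_1$ has an outgoing vertical arrow to $x_2$ in $C_1$). The outgoing horizontal arrow from $x_0 y_0$ lands at $x_0 \tilde y$ where $\tilde y$ is the first generator of the dual staircase reached horizontally from $y_0$; $\tilde y$ in turn emits a vertical arrow back (of length $b_2$ in the appropriate sense). One sets up a change of basis replacing $x_0 y_0$ or its horizontal neighbors so as to compare lengths $a_1$ versus $b_1$: if $b_1 > a_1$, the outgoing horizontal arrow from $x_0 y_0$ is strictly longer than the competing configuration and cannot be cancelled, while the incoming one can be arranged to persist, forcing the \emph{incoming} horizontal arrow to survive, so $\varepsilon = 1$; if $b_1 = a_1$ one must go one step further and the tie is broken by $b_2 < a_2$, again by a length comparison in a $2\times 2$ ``box''-type cancellation as in Lemma \ref{lem:box}. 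I would organize this as two cases and, in each, exhibit the explicit filtered change of basis and verify (via \eqref{eqn:fl}, \eqref{eqn:fl xy}) that it respects the filtration precisely under the stated inequality.

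The hard part will be bookkeeping for the iterated dual $D = C_2^{*}$ when $n > 1$: the reduced form of $n[-b_1,\dots,-b_n]$ is more complicated than a single staircase (it has diagonal arrows), so I must check that none of the diagonal arrows in $D$, nor the arrows involving $x_iy_j$ with $i,j > 0$, interfere with the local picture at $x_0 y_0$ — i.e.\ that the only horizontal arrows touching $x_0 y_0$ after simplification are the two identified above and their fate is governed solely by $a_1$ versus $b_1$ (and then $a_2$ versus $b_2$). Once that locality is established, the length comparison is short, and $\varepsilon(C_1 \otimes D) = 1$ for all $n$ follows, giving $[a_1,\dots,a_m] \gg [b_1,\dots,b_n]$.
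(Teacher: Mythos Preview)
The paper does not prove this lemma at all; it is simply cited from \cite[Lemmas~6.3 and~6.4]{Homsmooth}. So there is no in-paper argument to compare against, and the question is whether your sketch stands on its own.

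Your local picture at $x_0y_0^*$ is correct, and the plan to exhibit a vertically distinguished element in the image of $\partial^{\mathrm{horz}}$ is exactly the template used in the paper's proof of Lemma~\ref{lem:order j}. But there is a genuine gap in how you handle the $n>1$ case. You propose to work with a reduced representative $D$ of $n\cdot[-b_1,\dots,-b_n]$ and then argue ``locality'' near $x_0y_0$. For a general staircase $[b_1,\dots,b_n]$ there is no closed-form description of that reduced representative (contrast the proof of Lemma~\ref{lem:order j}, where Lemma~\ref{lem:polygon} is used to replace $rD_1$ by a single explicit staircase $D_r$ before tensoring; no analogous reduction is available here). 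Your locality claim---that the diagonal arrows in $D$ and the generators $x_iy_j$ with $i,j>0$ do not interfere---is precisely the content of the lemma and is only asserted, not verified. The heuristic that ``the longer outgoing arrow cannot be cancelled while the shorter incoming one persists'' is too vague to substitute for an explicit filtered change of basis.

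The approach actually taken in the cited reference avoids the $n$-fold tensor product entirely by proving something stronger for $n=1$ and then inducting. One shows that under the hypothesis $b_1>a_1$ (respectively $b_1=a_1$, $b_2<a_2$), not only is $\varepsilon\big([a_1,\dots]-[b_1,\dots]\big)=1$, but the difference again has first invariant $a_1$ (respectively first two invariants $a_1,a_2$). Since these invariants depend only on the $\varepsilon$-equivalence class (this is \cite[Lemma~6.1]{Homsmooth}, alluded to just before Figure~\ref{fig:F_alg}), one may subtract another copy of $[b_1,\dots]$ and repeat. This inductive structure is the missing idea in your proposal; once you have it, the $n=1$ computation you outlined is all that is needed.
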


\begin{lemma} \label{lem:order j}
Let $a,\, c > 0$ and $d,\, p,\, q \geq 0$. If $q > p$ or if $q = p$ and $d < c$, then
\[ [(1, a)^p\!, 1, a+c] \gg [(1, a)^q\!, 1, a+d]. \]
\end{lemma}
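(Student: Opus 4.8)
The plan is to mirror the structure of Lemma~\ref{lem:order i}, but working with the $j$-coordinate (the ``vertical'' direction) rather than the $i$-coordinate, since the hypothesis here involves the number of $(1,a)$-blocks and the final entry $a+d$ versus $a+c$, both of which are vertical-direction data in the staircase. Concretely, to show $[(1, a)_1^p, 1, a+c] \gg [(1, a)_1^q, 1, a+d]$, I need to prove that $\varepsilon\big( K \,\#\, (-nJ) \big) = 1$ for all $n \in \N$, where $K \in [(1,a)_1^p, 1, a+c]$ and $J \in [(1,a)_1^q, 1, a+d]$; equivalently, in the notation of the excerpt, that the class $[(1,a)_1^p, 1, a+c] + n \cdot [-(1,a)_1^q, -1, -(a+d)]$ has $\varepsilon = 1$ for every $n$. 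The first step is therefore to identify the reduced representative of $n \cdot [(1,a)_1^q, 1, a+d]$. Since $(1,a)_1^q, 1, a+d$ has all positive entries, it is a staircase, and by Lemma~\ref{lem:polygon} (applied iteratively, with all $p_\ell = q$, $c_\ell = d$, so that the hypotheses $q \geq \max\{p_\ell\}$ and $d \leq \min\{c_\ell\}$ hold trivially), the $n$-fold sum is $[((1,a)_1^q, 1, a+d)_{\ell=1}^n]$, i.e.\ the concatenated staircase with $n$ copies of the block. Call this staircase complex $C_2^{(n)}$, with basis $\{y_j\}$.

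Next I would form the tensor product $C_1 \otimes (C_2^{(n)})^*$, where $C_1 \in [(1,a)_1^p, 1, a+c]$ has basis $\{x_i\}$. Because $(C_2^{(n)})^*$ has all ``negative'' entries (it is a dual staircase, ``descending''), this tensor product will not itself be a staircase, but the computation of $\varepsilon$ only requires tracing the distinguished path from the vertically distinguished generator to the horizontally distinguished generator and reading off whether $x_0$ (or its image after simplification) has an incoming or outgoing horizontal arrow. The strategy is a partial simplification: perform a filtered change of basis to split off acyclic ``box'' summands $B_{i,j}'$ as in the proofs of Lemmas~\ref{lem:box} and~\ref{lem:polygon}, reducing $C_1 \otimes (C_2^{(n)})^*$ to a small complex whose leftmost/bottommost structure can be read directly. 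The key observation to set up is that the dual complex $(C_2^{(n)})^*$ contributes horizontal arrows of length $1$ and vertical arrows of length $a$ (from the $(1,a)$ blocks) together with one long vertical arrow of length $a+d$ per block, but all reversed; when tensored against $C_1$'s ascending staircase, the generator $x_0 y_0^*$ sits at filtration level $(0, \tau(C_1) - \tau(C_2^{(n)}))$, and I want to show the arrow structure around it is governed by comparing the initial $(1, a, 1, a, \ldots)$ portions—which agree—and then by the first place the two staircases differ, namely at the $(p+1)$-st versus $(q+1)$-st step (if $q > p$) or at the final long step $a+c$ versus $a+d$ (if $q = p$, $d < c$).

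Here is where the hypothesis does its work, and this is the step I expect to be the main obstacle: showing that after splitting off all the box summands, the reduced complex $P$ representing $[C_1 \otimes (C_2^{(n)})^*]$ has $\varepsilon(P) = 1$, i.e.\ that the distinguished generator $x_0$ has a \emph{unique incoming horizontal arrow}. In the case $q > p$, the intuition is that $C_1$'s staircase ``runs out'' of $(1,a)$-blocks before $C_2^{(n)}$'s does, so that after cancellation the surviving complex still begins with a net horizontal arrow into the distinguished element coming from $C_1$'s final long step $a+c$; in the case $q = p$ with $d < c$, all the $(1,a)$-blocks cancel in pairs and one is left comparing $a+c$ against $a+d$, with $c > d$ again forcing $\varepsilon = 1$ (this is essentially the $\mathrm{sgn}$-of-first-discrepancy phenomenon already visible in Lemma~\ref{lem:order i}, now one filtration-direction over). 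The delicate bookkeeping is in verifying that the filtered change of basis that cancels the matched blocks genuinely respects the filtration—this uses $a_i \le b_j \le a_{i+1}$-type inequalities exactly as in \eqref{eqn:fl xy}, which hold because every interior arrow has length $1$ or $a$—and that no unwanted arrow survives into $x_0$ from the acyclic part. Once the reduced representative $P$ is pinned down and $\varepsilon(P) = 1$ is confirmed for \emph{every} $n$ (the structure of $P$ near $x_0$ being independent of $n$, since adding more blocks to $C_2^{(n)}$ only lengthens the cancelling tail), we conclude $\varepsilon(K \,\#\, (-nJ)) = 1$ for all $n$, which is exactly $[(1,a)_1^p, 1, a+c] \gg [(1,a)_1^q, 1, a+d]$.
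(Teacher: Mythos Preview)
Your high-level strategy matches the paper's: collapse $n[J]$ to the single staircase $D_n \in [((1,a)_1^q, 1, a+d)_1^n]$ via iterated application of Lemma~\ref{lem:polygon}, then show $\varepsilon(C \otimes D_n^*) = 1$ directly. That reduction is correct and is exactly what the paper does.

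The gap is in the main technical step. You propose to ``split off acyclic box summands $B'_{i,j}$ as in the proofs of Lemmas~\ref{lem:box} and~\ref{lem:polygon},'' citing the inequalities of \eqref{eqn:fl xy}. But those lemmas concern tensor products of two \emph{positive} staircases; here the second factor is a dual, so $fl(y_j^*) = -fl(y_j)$ and the geometry of the would-be boxes is inverted. The filtration checks in \eqref{eqn:fl xy} do not transfer, and it is not clear that any analogue of the $B'_{i,j}$ decomposition exists in this setting. Your intuition that ``the structure of $P$ near $x_0$ is independent of $n$'' is correct, but you have not produced a candidate for the distinguished element after basis change.

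The paper's device is different and is the missing idea: rather than attempting a full decomposition, it defines \emph{diagonal sums}
\[
u_i = \sum_{k} x_{i+k}\,y_k^* \qquad (\text{over } 0 \le k \le m-i,\ \text{with a parity restriction when $i$ is odd}),
\]
and replaces $x_i y_0^* \to u_i$ for $i = 0,\dots,m = 2p$. One then verifies only two facts: $\partial^{\text{vert}} u_0 = 0$ (because $\partial u_0 = x_m y_{m+1}^*$, which lies in a column with $i$-coordinate $< 0$) and $\partial^{\text{horz}} u_1 = u_0$ (because the other terms $x_m y_m^*$ and $u_2$ in $\partial u_1$ have $j$-coordinate strictly below $\tau$; this is precisely where the hypotheses $c > 0$, and $d < c$ when $q = p$, enter). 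The filtration check for this basis change reduces to $c \ge 0$ and $d \le c$ if $q = p$, which holds. No box-splitting is needed.
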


\begin{proof}
Set $m = 2p+2$ and $n = 2q+2$. For $C \in ((1, a)^p\!, 1, a+c)$ with basis $\{x_i\}$, $i = 0, \ldots, 2m$, and $D_r \in ((1, a)^q\!, 1, a+d)^r$ with basis $\{y_j\}$, $j = 0, \ldots, 2nr$, we prove $\varepsilon(C \otimes rD_1^*) = 1$ for all $r \in \N$. By inductive use of Lemma \ref{lem:polygon}, $rD_1 \sim_\varepsilon D_r$, and so we may work with the simpler complexes $D_r$, i.e., we show $\varepsilon(C \otimes D_r^*) = 1$.

By \cite[Section 3]{Homsmooth}, Definition \ref{def:epsilon} (1) is equivalent to finding a basis with element $u_0$ that is the distinguished element of some vertically simplified basis for $C \otimes D_r^*$ and in the image of the horizontal differential. Define the basis element sums
\begin{equation} \label{eqn:cob u}
u_i = \smashoperator{\sum_{k=0}^{m-i}} x_{i+k}y_k^*.
\end{equation}
Supposing $q \geq p$, we may produce the new basis $x_iy_0^* \to u_i$ for all $i < m$. Clearly $u_0 = (x_0y_0^*)'$ has no incoming vertical arrows since $x_0y_0^*$ has none, so it suffices to show $\partial^\text{vert}u_0 = 0$ and $\partial^\text{horz}u_1 = u_0$. That is, we need only partially simplify the basis.

First, we check that $x_iy_0^* \to u_i$ respects the filtration. Let $0 < k < m-i$. By (\ref{eqn:fl}, \ref{eqn:mixed fl}) with $i$ even,
\begin{align*}
\fl(x_{i+k}y_k^*) &= \begin{cases}[1] (\fl(x_{i+k-1}) + (1, 0)) + (\fl(y_{k-1}^*) - (1, 0)) &k \text{ odd} \\ (\fl(x_{i+k-1}) - (0, a)) + (\fl(y_{k-1}^*) + (0, a)) &k \text{ even} \end{cases} \\
&= \fl(x_{i+k-1}y_{k-1}^*) \\
\fl(x_my_{m-i}^*) &= (\fl(x_{m-1}) - (0, a+c)) + (\fl(y_{m-i-1}^*) + (0, a)) \\
&= \fl(x_{m-1}y_{m-i-1}^*) - (0, c),\  i \neq 0 \text{ if } q = p
\end{align*}
and $\fl(x_my_m^*) = \fl(x_{m-1}y_{m-1}^*) - (0, c-d)$ if $q = p$. For $i$ odd,
\begin{align*}
\fl(x_{i+k}y_k^*) &= \begin{cases}[1] (\fl(x_{i+k-1}) - (0, a)) + (\fl(y_{k-1}^*) - (1, 0)) &k \text{ odd} \\ (\fl(x_{i+k-1}) + (1, 0)) + (\fl(y_{k-1}^*) + (0, a)) &k \text{ even} \end{cases} \\
&= \fl(x_{i+k-1}y_{k-1}^*) + (-1)^k(1, a) \\
\fl(x_my_{m-i}^*) &= (\fl(x_{m-1}) - (0, a+c)) + (\fl(y_{m-i-1}^*) - (1, 0)) \\
&= \fl(x_{m-1}y_{m-i-1}^*) - (1, a+c).
\end{align*}
Letting $e_i = i \bmod 2$, it follows that
\begin{equation} \label{eqn:fl u}
\fl(x_iy_0^*) = \begin{cases}[1] \fl(x_{i+k}y_k^*) + e_{ik}(1, a) \\ \fl(x_my_{m-i}^*) + (0, c) + e_i(1, a) &i \neq 0 \text{ if } q = p \\ \fl(x_my_m^*) + (0, c-d) &i = 0,\, q = p. \end{cases}
\end{equation}
We require $\fl(x_iy_0^*) \geq \fl(x_{i+k}y_k^*)$ for each $k$ summed over in $u_i$. By (\ref{eqn:fl u}), this is equivalent to the condition that $c \geq 0$ and that $d \leq c$ if $q = p$, which is true by hypothesis.

Second, we check differentials. Using (\ref{eqn:differential}, \ref{eqn:mixed differential}) with $i$ even,
\begin{align*}
\partial u_i &= \partial \smashoperator{\sum_{k=0}^{m-i}} x_{i+k}y_k^* = \partial(x_iy_0^*) + \partial \smashoperator{\sum_{\substack{k=1 \\ k\text{ odd}}}^{m-i-1}} (x_{i+k}y_k^* + x_{i+k+1}y_{k+1}^*) \\
&= x_iy_1^* + \smashoperator{\sum_{\substack{k=1 \\ k\text{ odd}}}^{m-i-1}} ((x_{i+k-1}y_k^* + x_{i+k+1}y_k^*) + (x_{i+k+1}y_{k+2}^* + x_{i+k+1}y_k^*)) \\
&= x_iy_1^* + \smashoperator{\sum_{\substack{k=1 \\ k\text{ odd}}}^{m-i-1}} (x_{i+k-1}y_k^* + x_{i+k+1}y_{k+2}^*) + \smashoperator{\sum_{\substack{k=1 \\ k\text{ odd}}}^{m-i-1}} 2x_{i+k+1}y_k^* \\
&= x_my_{m-i+1}^* + \smashoperator{\sum_{\substack{k=1 \\ k\text{ odd}}}^{m-i-1}} 2x_{i+k-1}y_k^* + \smashoperator{\sum_{\substack{k=1 \\ k\text{ odd}}}^{m-i-1}} 2x_{i+k+1}y_k^* \\
&= x_my_{m-i+1}^*.
\end{align*}
For $i$ odd,
\begin{align*}
\partial u_i &= \partial \smashoperator{\sum_{k=0}^{m-i}} x_{i+k}y_k^* = \partial(x_iy_0^*) + \partial \smashoperator{\sum_{\substack{k=2 \\ k\text{ even}}}^{m-i-1}} x_{i+k}y_k^* + \partial \smashoperator{\sum_{\substack{k=1 \\ k\text{ odd}}}^{m-i}} x_{i+k}y_k^* \\
&= \smashoperator{\sum_{\substack{k=0 \\ k\text{ even}}}^{m-i-1}} (x_{i+k-1}y_k^* + x_{i+k+1}y_k^* + x_{i+k}y_{k+1}^*) + \smashoperator{\sum_{\substack{k=2 \\ k\text{ even}}}^{m-i-1}} x_{i+k}y_{k-1}^* + 0 \\
&= \smashoperator{\sum_{\substack{k=0 \\ k\text{ even}}}^{m-i-1}} (x_{i+k-1}y_k^* + x_{i+k}y_{k+1}^*) + x_{i+1}y_0^* + \smashoperator{\sum_{\substack{k=2 \\ k\text{ even}}}^{m-i-1}} (x_{i+k+1}y_k^* + x_{i+k}y_{k-1}^*) \\
&= -x_my_{m-i+1}^* + \smashoperator{\sum_{k=0}^{m-(i-1)}} x_{i-1+k}y_k^* + \smashoperator{\sum_{k=0}^{m-(i+1)}} x_{i+1+k}y_k^* \\
&= x_my_{m-i+1}^* + u_{i-1} + u_{i+1}.
\end{align*}
We have shown that
\begin{equation} \label{eqn:differential u}
\partial u_i = \begin{cases}[1] x_my_{m-i+1}^* + u_{i-1} + u_{i+1} &i \text{ odd} \\ x_my_{m-i+1}^* &i \text{ even}. \end{cases}
\end{equation}

Now suppose that $c > 0$ and that $d < c$ if $q = p$. Letting $\tau = \tau(C \otimes D_r^*)$, note that $\fl(u_0) = (0, \tau)$, $\fl(u_1) = (1, \tau)$, and $\fl(u_2) = (1, \tau-a)$ if $p > 0$ or $(1, \tau-a-c)$ if $p = 0$. Using (\ref{eqn:fl u}), also note that $\fl(x_my_m^*) = (0, \tau-c)$ if $q > p$ or $(0, \tau-c+d)$ if $q = p$. By (\ref{eqn:differential u}), we have $\partial u_0 = x_my_{m+1}^*$. Notice that the $i$-coordinate of $u_0$ is $0$, while the $i$-coordinate of $x_my_{m+1}^*$ is less than $0$ (because $x_my_m^*$ is at $i = 0$); thus $\partial^\text{vert}u_0 = 0$. Again by (\ref{eqn:differential u}), we have $\partial u_1 = x_my_m^* + u_0 + u_2$. Notice that the $j$-coordinate of $u_1$ and $u_0$ is $\tau$, while the $j$-coordinate of both $u_2$ and $x_my_m^*$ is less than $\tau$; thus $\partial^\text{horz}u_1 = u_0$.
\end{proof}

It is easily seen that $\{u_i \mid i < m\} \cup \{x_my_j^* \mid j \leq nr\}$ with its reflection in the above proof spans a subcomplex isomorphic to the reduced complex $P_r \in ((1, a)^p\!, 1, a+c, ((-1, -a)^q\!, -1, -a-d)^r)$. This set belongs to a fully vertically and horizontally simplified basis for $C \otimes D_r^*$, from which one obtains $[C] - r[D_1] = [P_r]$, which we leave as an exercise for the dedicated reader.

\section{Floer Complexes of Selected $L$-space Knots} \label{sec:floer}

In this section, we find formulas for the Alexander polynomials of certain cables of torus knots. The knots we consider are all $L$-space knots, and thus the computations of the Alexander polynomials in fact gives us the knot Floer complexes of these knots.

This first lemma involves taking a well-known formula for the Alexander polynomial of iterated torus knots and grouping the terms in order to simply the expression, for example by noticing telescoping sums.

\begin{lemma} \label{lem:cablepoly}
The Alexander polynomial of the $(p, pm(m-1)+1)$-cable of the $(m, m+1)$-torus knot is
\[ \Delta_{T_{m,m+1;m,pm(m-1)+1}}(t) = \smashoperator{\sum_{i=0}^{pm(m-1)}} t^{ip} - t \left( \sum_{i=0}^{p-1} t^{i(pm^2-pm+1)} \right) \left( \smashoperator[r]{\sum_{j=0}^{m-2}} t^{jpm} \left( \sum_{k=0}^j t^{kp} + \smashoperator{\sum_{k=j+1}^{m-1}} t^{kp-1} \right) \right). \]
Similarly, the Alexander polynomial of the $(p, pm(m-1)-1)$-cable of the $(m, m+1)$-torus knot is
\[ \Delta_{T_{m,m+1;p,pm(m-1)-1}}(t) = -t \smashoperator{\sum_{i=0}^{pm(m-1)-2}} t^{ip} + \left( \sum_{i=0}^{p-1} t^{i(pm^2-pm-1)} \right) \left( \smashoperator[r]{\sum_{j=0}^{m-2}} t^{jpm} \left( \sum_{k=0}^j t^{kp} + \smashoperator{\sum_{k=j+1}^{m-1}} t^{kp+1} \right) \right). \]
\end{lemma}

\begin{proof}
We know that
\[ \Delta_{T_{m,m+1;p,pm(m-1)+1}}(t) = \frac{(t^{pm(m+1)} - 1)(t^p - 1)}{(t^{pm} - 1)(t^{p(m+1)} - 1)} \cdot \frac{(t^{p(pm(m-1)+1)} - 1)(t - 1)}{(t^p - 1)(t^{pm(m-1)+1} - 1)}. \]
Let
\[ P(t) = \smashoperator{\sum_{i=0}^{pm(m-1)}} t^{ip} - t \left( \sum_{i=0}^{p-1} t^{i(pm^2-pm+1)} \right) \left( \smashoperator[r]{\sum_{j=0}^{m-2}} t^{jpm} \left( \sum_{k=0}^j t^{kp} + \smashoperator{\sum_{k=j+1}^{m-1}} t^{kp-1} \right) \right). \]
To prove the lemma, we will show that
\begin{equation} \label{eqn:wanttoshow}
(t^{pm} - 1)(t^{p(m+1)} - 1)(t^p - 1)(t^{pm(m-1)+1} - 1) \cdot P(t) = (t^{pm(m+1)} - 1)(t^p - 1)(t^{p(pm(m-1)+1)} - 1)(t - 1).
\end{equation}

We first consider
\begin{equation} \label{eqn:telescope1}
(t^{pm} - 1)(t^{p(m+1)} - 1)(t^p - 1)(t^{pm(m-1)+1} - 1) \smashoperator{\sum_{i=0}^{pm(m-1)}} t^{ip}.
\end{equation}
Note the telescoping sum
\[ (t^p - 1) \smashoperator{\sum_{i=0}^{pm(m-1)}} t^{ip} = t^{p^2m^2-p^2m+p} - 1. \]
Hence (\ref{eqn:telescope1}) is equal to
\begin{equation} \label{eqn:part1}
(t^{pm} - 1)(t^{p(m+1)} - 1)(t^{pm(m-1)+1} - 1)(t^{p^2m^2-p^2m+p} - 1).
\end{equation}
Next, we consider
\begin{flalign*}
(t^{pm} - 1)(t^{p(m+1)} - 1)(t^p - 1)&&
\end{flalign*}
\vspace{-30pt}
\begin{flalign*}
&&{} \cdot (t^{pm(m-1)+1} - 1)(-t) \left( \sum_{i=0}^{p-1} t^{i(pm^2-pm+1)} \right) \left( \smashoperator[r]{\sum_{j=0}^{m-2}} t^{jpm} \left( \sum_{k=0}^j t^{kp} + \smashoperator{\sum_{k=j+1}^{m-1}} t^{kp-1} \right) \right).
\end{flalign*}
We first notice that
\begin{equation} \label{eqn:part2}
(t^{pm(m-1)+1} - 1)(-t) \left( \sum_{i=0}^{p-1} t^{i(pm^2-pm+1)} \right) = -t \cdot (t^{p^2m^2-p^2m+p} - 1).
\end{equation}
Grouping the remaining factors yields
\begin{align*} \label{eqn:part3}
&(t^{pm} - 1)(t^{p(m+1)} - 1)(t^p - 1) \left( \smashoperator[r]{\sum_{j=0}^{m-2}} t^{jpm} \left( \sum_{k=0}^j t^{kp} + \smashoperator{\sum_{k=j+1}^{m-1}} t^{kp-1} \right) \right) \\
&= (t^{pm} - 1)(t^{p(m+1)} - 1) \left( \smashoperator[r]{\sum_{j=0}^{m-2}} t^{jpm} \left( t^{jp+p} - 1 + t^{pm-1} - t^{jp+p-1} \right) \right) \\
&= (t^{pm} - 1)(t^{p(m+1)} - 1) \left( \smashoperator[r]{\sum_{j=0}^{m-2}} t^{jpm} \left( t^{pm-1} - 1 + t^{jp+p-1}(t - 1) \right) \right) \\
&= (t^{pm} - 1)(t^{p(m+1)} - 1) \left( (t^{pm-1} - 1) \smashoperator{\sum_{j=0}^{m-2}} t^{jpm} + t^{p-1}(t - 1) \smashoperator{\sum_{j=0}^{m-2}} t^{jp(m+1)} \right) \\
&= (t^{p(m+1)} - 1)(t^{pm-1} - 1)(t^{pm^2-pm} - 1) + (t^{pm} - 1)t^{p-1}(t - 1)(t^{pm^2-p} - 1). \numberthis
\end{align*}
We have shown that the lefthand side of (\ref{eqn:wanttoshow}) is equal to
\[ (\ref{eqn:part1}) + (\ref{eqn:part2}) \cdot (\ref{eqn:part3}), \]
where we have used telescoping sums to eliminate the summation from each of those expressions. It is straightforward to verify that this is equal to the righthand side of (\ref{eqn:wanttoshow}).

The calculation for $\Delta_{T_{m,m+1;p,pm(m-1)-1}}(t)$ follows in a similar manner.
\end{proof}

The following lemma is essentially a restatement of Lemma \ref{lem:cablepoly} in our language for the $\varepsilon$-equivalence classes of staircase complexes, i.e., knot Floer complexes of $L$-space knots.

\begin{lemma} \label{lem:cablestaircase}
Let $2g = m(m-1)$ and
\begin{equation} \label{eqn:cableseq}
\begin{aligned}
(x_s)_{s=1}^{4gp} &= (((i, p-i)^j\!, (i-1, p-i+1)^{m-j})_{j=1}^{m-1})_{i=1}^p \\
(y_s)_{s=1}^{4gp} &= (((p-i, i)^j\!, (p-i+1, i-1)^{m-j})_{j=1}^{m-1})_{i=1}^p.
\end{aligned}
\end{equation}
Then for positive $p$ and $m$,
\[ \llbracket T_{m,m+1;p,pm(m-1)+1} \rrbracket = [(x_s)_{s=1}^{2gp}] \qquad \text{and} \qquad \llbracket T_{m,m+1;p,pm(m-1)-1} \rrbracket = [(y_s)_{s=2}^{2gp}]. \]
\end{lemma}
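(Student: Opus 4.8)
The plan is to derive this lemma from Lemma~\ref{lem:cablepolynomial} together with the Ozsv\'ath--Szab\'o description of the knot Floer complex of an $L$-space knot. First I would check that the knots involved are $L$-space knots: positive torus knots admit positive lens space surgeries, so $T_{p,p+1}$ is an $L$-space knot, and Hedden's cabling theorem \cite{HeddencablingII} makes $T_{p,p+1;m,q}$ an $L$-space knot whenever $q \ge m(2g(T_{p,p+1})-1) = m(p^2-p-1)$. Since $q = mp(p-1)\pm 1 = m(p^2-p-1)+(m\pm1)$, this holds for all $m\ge 1$ (and for $m=1$ the cable is simply $T_{p,p+1}$ itself). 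Hence \cite[Theorem 1.2]{OSlens} applies, and $\CFKi$ of each such knot is the staircase complex determined by its Alexander polynomial, which Lemma~\ref{lem:cablepolynomial} computes.

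Next I would record the passage from the Alexander polynomial to the staircase class in a form slightly more flexible than the one in Section~\ref{sec:background}. If $K$ is an $L$-space knot and $0=N_0\le N_1\le\cdots\le N_L=2g(K)$ is \emph{any} non-decreasing integer sequence, with $L$ even, for which $\Delta_K(t)=\sum_{s=0}^{L}(-1)^s t^{N_s}$ as a formal alternating sum, then cancelling the consecutive pairs with $N_{s-1}=N_s$ recovers the reduced form $\sum_i(-1)^i t^{n_i}$; so, using the zero-removal convention $[\dots,a,0,b,\dots]=[\dots,a+b,\dots]$ in $\cF_\text{alg}$, we get $\llbracket K\rrbracket=[(N_s-N_{s-1})_{s=1}^{L/2}]$. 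It therefore suffices to exhibit such a presentation of the polynomial $P(t)$ of Lemma~\ref{lem:cablepolynomial} whose consecutive differences are exactly the sequence $(x_s)$ of \eqref{eqn:cableseq} (and likewise for the second cable with $(y_s)$).

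This is the combinatorial core. Setting $N_s=\sum_{r=1}^s x_r$, one observes that every pair occurring in \eqref{eqn:cableseq}---whether $(i,m-i)$ or $(i-1,m-i+1)$---has coordinate sum $m$, so $N_{2t}=tm$ for $t=0,\dots,mp(p-1)$, which are precisely the exponents of the first summand $\sum_{i=0}^{mp(p-1)}t^{im}$ of $P(t)$. For the odd-indexed partial sums one writes the position of the $(t+1)$-st pair in terms of its block: $t=\big((i-1)(p-1)+j-1\big)p+(k-1)$ with $i\in\{1,\dots,m\}$, $j\in\{1,\dots,p-1\}$, $k\in\{1,\dots,p\}$, the leading coordinate of that pair being $i$ if $k\le j$ and $i-1$ if $k>j$. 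After the shift $i'=i-1,\ j'=j-1,\ k'=k-1$, the resulting values $N_{2t+1}$ are exactly the exponents $1+i'(mp^2-mp+1)+j'mp+k'm$ (for $k'\le j'$) and $i'(mp^2-mp+1)+j'mp+k'm$ (for $k'>j'$) appearing in the second summand of $P(t)$, and a term count shows each is hit once. This yields $P(t)=\sum_{s=0}^{2mp(p-1)}(-1)^s t^{N_s}$, whence $\llbracket T_{p,p+1;m,(p-1)pm+1}\rrbracket=[(x_s)_{s=1}^{mp(p-1)}]$.

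The statement for the $(m,(p-1)pm-1)$-cable is entirely analogous: the pairs in $(y_s)$ again sum to $m$, the full sequence $(y_s)_{s=1}^{2mp(p-1)}$ is a palindrome with $y_1=y_{2mp(p-1)}=m-1$, and deleting these two entries leaves a palindrome of length $2mp(p-1)-2$ whose first half is $(y_s)_{s=2}^{mp(p-1)}$ and whose total is $2g$ of that cable; its partial sums match the exponents of the second polynomial in Lemma~\ref{lem:cablepolynomial} by the same block-indexing argument (now with $mp^2-mp-1$ in place of $mp^2-mp+1$, inner exponents $km+1$ in place of $km-1$, and the roles of the positive and negative summands reversed to reflect the leading $-t$). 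I expect the only real labor to be this bookkeeping---aligning the nested index ranges $(i,j,k)$ of \eqref{eqn:cableseq} with the nested summation ranges of $P(t)$ and verifying that no exponent is skipped or counted twice. There is no conceptual obstacle beyond keeping the indices straight.
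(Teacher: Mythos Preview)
Your proposal is correct and follows essentially the same route as the paper: verify the knots are $L$-space knots, then show that the partial sums $N_s=\sum_{r\le s}x_r$ (resp.\ $y_r$) reproduce the exponents of the alternating-sign presentation of the Alexander polynomial computed in Lemma~\ref{lem:cablepolynomial}, using the observation that each pair in \eqref{eqn:cableseq} has coordinate sum $m$ to identify the even partial sums, and a block decomposition indexed by $(i,j,k)$ to identify the odd ones. The paper's proof is terser---it asserts the $L$-space property without writing out the Hedden bound and states the odd-exponent identification as a double sum rather than unpacking the block indices---but the argument is the same.
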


\begin{proof}
We know that $\CFKi(T_{m,m+1;p,pm(m-1)\pm1})$ are staircases. We translate backwards from the sequences of step lengths to the corresponding summations. It will be helpful to rewrite the sequences as
\begin{equation} \label{eqn:cableseqk}
\begin{aligned}
(x_{2s+1}, x_{2s+2})_{s=0}^{2gp-1} &= (((i+1, p-i-1)_{k=0}^j, (i, p-i)_{k=j+1}^{m-1})_{j=0}^{m-2})_{i=0}^{p-1} \\
(y_{2s+1}, y_{2s+2})_{s=0}^{2gp-1} &= (((p-i-1, i+1)_{k=0}^j, (p-i, i)_{k=j+1}^{m-1})_{j=0}^{m-2})_{i=0}^{p-1}
\end{aligned}
\end{equation}
and to note that each integer $s$ can be written uniquely as
\begin{equation} \label{eqn:sindex}
s = i(m-1)m+jm+k \quad \text{where } 0 \leq j < m-1 \text{ and } 0 \leq k < m.
\end{equation}

Let $n_0 = 0$, and inductively define $n_i = n_{i-1} + x_i$ for $i > 0$. More generally, it follows that $n_i = n_j + \sum_{k=j+1}^i x_k$ for $i > j$. We claim that $\sum_{i=0}^{4gp} (-1)^it^{n_i}$ equals $\Delta_{T_{m,m+1;p,pm(m-1)+1}}(t)$, as given by the first polynomial in the statement of Lemma \ref{lem:cablepoly}. Suppose that $n_{2s} = sp$. For $i$, $j$, and $k$ given as in (\ref{eqn:sindex}), i.e., $i = \lfloor s/((m-1)m) \rfloor$, $j = \lfloor s/m \rfloor - i(m-1)$, and $k = s - i(m-1)m - jm$, it is clear from (\ref{eqn:cableseqk}) that
\[ n_{2(s+1)} = n_{2s} + x_{2s+1} + x_{2s+2} = \left. \begin{cases}[1] sp + i + 1 + p - i - 1 &k \leq j \\ sp + i + p - i &k > j \end{cases} \right\rbrace = (s+1)p. \]
It follows that $n_{2s} = sp$ for each $s$. That is,
\[ \sum_{s=0}^{2gp} t^{n_{2s}} = \sum_{s=0}^{2gp} t^{sp} = \smashoperator{\sum_{i=0}^{pm(m-1)}} t^{ip}, \]
which are all of the terms with positive coefficient. Next, we have from (\ref{eqn:cableseqk}) that
\[ n_{2s+1} = n_{2s} + x_{2s+1} = \begin{cases}[1] sp + i + 1 &k \leq j \\ sp + i &k > j. \end{cases} \]
It follows that
\[ \smashoperator{\sum_{s=0}^{2gp-1}} t^{n_{2s+1}} = \smashoperator{\sum_{\substack{s=0 \\ k\leq j}}^{2gp-1}} t^{sp+i+1} + \smashoperator{\sum_{\substack{s=0 \\ k>j}}^{2gp-1}} t^{sp+i} = \sum_{i=0}^{p-1} \sum_{j=0}^{m-2} \sum_{k=0}^j t^{sp+i+1} + \sum_{i=0}^{p-1} \smashoperator[r]{\sum_{j=0}^{m-2}} \smashoperator[r]{\sum_{k=j+1}^{m-1}} t^{sp+i}, \]
where $s = i(m-1)m+jm+k$, which are all of the terms with negative coefficient. It is straightforward to verify that $\sum_{i=0}^{4gp} (-1)^it^{n_i} = \sum_{s=0}^{2gp} t^{n_{2s}} - \sum_{s=0}^{2gp-1} t^{n_{2s+1}}$ is equal to $\Delta_{T_{m,m+1;p,pm(m-1)+1}}(t)$.

Now let $n_0 = 0$, and inductively define $n_i = n_{i-1} + y_i$ for $i > 0$ so that $n_i = n_j + \sum_{k=j+1}^i y_k$ for $i > j$. Note that the sequence for $\llbracket T_{m,m+1;p,pm(m-1)-1} \rrbracket$ begins with $y_2$, not $y_1$. We claim that $\sum_{i=1}^{4gp-1} (-1)^it^{n_i}$ equals $\Delta_{T_{m,m+1;p,pm(m-1)-1}}(t)$, as given by the second polynomial in the statement of Lemma \ref{lem:cablepoly}, up to a monomial factor. We find $\sum_{i=0}^{4gp} (-1)^it^{n_i}$ in a similar manner as before. The terms with positive coefficient $\sum_{s=0}^{2gp} t^{n_{2s}}$ are the same, and the terms with negative coefficient are
\[ \smashoperator{\sum_{s=0}^{2gp-1}} t^{n_{2s+1}} = \sum_{i=0}^{p-1} \sum_{j=0}^{m-2} \sum_{k=0}^j t^{sp+p-i-1} + \sum_{i=0}^{p-1} \smashoperator[r]{\sum_{j=0}^{m-2}} \smashoperator[r]{\sum_{k=j+1}^{m-1}} t^{sp+p-i}, \]
where $s = i(m-1)m+jm+k$. It is straightforward to verify that $\sum_{i=1}^{4gp-1} (-1)^it^{n_i}$ = $\sum_{s=1}^{2gp-1} t^{n_{2s}} - \sum_{s=0}^{2gp-1} t^{n_{2s+1}}$ is equal to $-t^{p-1}\Delta_{T_{m,m+1;p,pm(m-1)-1}}(t)$.
\end{proof}

\noindent The corollary below follows by taking $p = 1$ (cf. \cite[Proposition 6.1]{HeddenLR}).

\begin{corollary} \label{cor:torusstaircase}
Let
\begin{equation} \label{eqn:torusseq}
(t_s)_{s=1}^{2m-2} = (j, m-j)_{j=1}^{m-1}.
\end{equation}
Then for positive $m$,
\[ \llbracket T_{m,m+1} \rrbracket = [(t_s)_{s=1}^{m-1}]. \]
\end{corollary}

\begin{proof}
Take $p = 1$ in $(x_s)$, since $T_{m,m+1;1,m(m-1)+1} = T_{m,m+1}$. Then $[(x_s)_{s=1}^{2gp}]$ simplifies as
\[ [(x_s)_{s=1}^{2g}] = [((1, 0)^j\!, (0, 1)^{m-j})_{j=1}^{(m-1)/2}] = [(j, m-j)_{j=1}^{(m-1)/2}] \]
for $m$ odd, and similarly as $[(x_s)_{s=1}^{2g}] = [(j, m-j)_{j=1}^{(m-2)/2}\!, m/2]$ for $m$ even. It follows that $[(x_s)_{s=1}^{2gp}] = [(t_s)_{s=1}^{m-1}]$.
\end{proof}

Note that $(x_s)$ and $(y_s)$ have $0$ entries where the terms with positive coefficient cancel those with negative coefficient (see proof of Lemma \ref{lem:cablestaircase}). This occurs when $i = 1$ or $p$ in (\ref{eqn:cableseq}). We can therefore simplify the complexes $[(x_s)_{s=1}^{2gp}]$ and $[(y_s)_{s=2}^{2gp}]$ as follows:
\begin{align*}
[(x_s)_{s=1}^{2gp}] &= [((1, p-1)^j\!, (0, p)^{m-j})_{j=1}^{m-1}\!, (x_s)_{s=4g+1}^{2gp}] \\
&= [((1, p-1)^{j-1}\!, 1, p-1, 0, p(m-j))_{j=1}^{m-1}\!, (x_s)_{s=4g+1}^{2gp}] \\
&= [((1, p-1)^{j-1}\!, 1, p(m-j+1)-1)_{j=1}^{m-1}\!, (x_s)_{s=4g+1}^{2gp}] \\
[(y_s)_{s=2}^{2gp}] &= [1, (p, 0)^{m-1}\!, ((p-1, 1)^j\!, (p, 0)^{m-j})_{j=2}^{m-1}\!, p-2, (y_s)_{s=4g+2}^{2gp}] \\
&= [1, p(m-1), 0, (p-1, (1, p-1)^{j-1}\!, 1, p(m-j), 0)_{j=2}^{m-1}\!, p-2, (y_s)_{s=4g+2}^{2gp}] \\
&= [((1, p-1)^{j-1}\!, 1, p(m-j), 0, p-1)_{j=1}^{m-2}\!, (1, p-1)^{m-2}\!, 1, p, 0, p-2, (y_s)_{s=4g+2}^{2gp}] \\
&= [((1, p-1)^{j-1}\!, 1, p(m-j+1)-1)_{j=1}^{m-2}\!, (1, p-1)^{m-2}\!, 1, 2p-2, (y_s)_{s=4g+2}^{2gp}].
\end{align*}
Changing index by letting $j \to j+1$, we can write
\begin{equation} \label{eqn:initialstairs}
\begin{aligned}{}
[(x_s)_{s=1}^{2gp}] &= [((1, p-1)^j\!, 1, p(m-j)-1)_{j=0}^{m-2}\!, (x_s)_{s=4g+1}^{2gp}] \\
[(y_s)_{s=2}^{2gp}] &= [((1, p-1)^j\!, 1, p(m-j)-1)_{j=0}^{m-3}\!, (1, p-1)^{m-2}\!, 1, 2p-2, (y_s)_{s=4g+2}^{2gp}].
\end{aligned}
\end{equation}

\section{Archimedean Equivalence Classes of $\cF$} \label{sec:archimedean}

We can now combine the results of Sections \ref{sec:tensor}, \ref{sec:ordering}, and \ref{sec:floer} to find new Archimedean equivalence classes in $\cF$. In the following lemma, we define the knots that will be at the heart of Theorem \ref{thm:ordertype}.

\begin{lemma}
Set $p = i+1$ and $m = |j|+3$ for $i, j \in \Z$. Define
\[ K_{(i, j)} = \begin{cases} T_{m,m+1;p,pm(m-1)+1} \cs -T_{pm,pm+1} &i > 0,\, j \geq 0 \\ T_{m,m+1;p,pm(m-1)+1} \cs -T_{m,m+1;p,pm(m-1)-1} &i > 0,\, j < 0 \\ T_{m,m+1} \cs -T_{2,3;\lceil m/2 \rceil,2\lfloor m/2 \rfloor+1} &i = 0,\, j \geq 0. \end{cases} \]
Then $\llbracket K_{(0, 0)} \rrbracket = 0$ and for $(i, j) > (0, 0)$,
\[ \llbracket K_{(i, j)} \rrbracket \sim_\textup{Ar} \begin{cases}{} [1, i, 1, 2i+1+j(i+1)] &j \geq 0 \\ [(1, i)^{-j}, 1, i, 1, 2i+1] &j \leq 0. \end{cases} \]
\end{lemma}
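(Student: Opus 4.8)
The plan is to read each $\llbracket K_{(i,j)}\rrbracket$ off from Sections \ref{sec:tensor}--\ref{sec:floer}. Every defining knot is a connected sum of at most two $L$-space knots and mirrors of $L$-space knots, so by Lemma \ref{lem:cablestaircase}, Corollary \ref{cor:hedden} and \eqref{eqn:initialstair} the class $\llbracket K_{(i,j)}\rrbracket\in\cF\subset\cF_\textup{alg}$ is a sum of two explicit staircase classes (or such a sum with one summand negated). I would split each staircase into a connected sum of short staircases using Lemmas \ref{lem:box} and \ref{lem:polygon}, cancel the summands common to the two sides, and then use Lemmas \ref{lem:order i} and \ref{lem:order j} to discard all remaining summands but one: recall that $[a_1,a_2,\dots]$ with $a_1=1$ is $\gg$ any staircase whose leading entry is at least $2$, and that among the classes $[(1,a)_1^k,1,a+c]$ the $\gg$-largest is the one with the fewest copies of $(1,a)$, ties broken by smallest $c$. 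Since $\ll$ is preserved under finite sums, the surviving summand pins down the Archimedean class. (The computation is done in $\cF_\textup{alg}$; since $\cF$ carries the restricted order, $\sim_\textup{Ar}$ between elements of $\cF$ is the same whether read in $\cF$ or $\cF_\textup{alg}$.)

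For $i>0$, $j\geq0$ (so $m=i+1$, $p=j+3$): by Lemma \ref{lem:cablestaircase} and \eqref{eqn:initialstair}, the staircase $\llbracket T_{p,p+1;m,(p-1)pm+1}\rrbracket=\big[(x_s)_{s=1}^{mp(p-1)}\big]$ starts with the simplified ``$i=1$ block'' $\big[((1,m-1)_1^\ell,1,(p-\ell)m-1)_{\ell=0}^{p-2}\big]$, followed by entries coming from the blocks of index $\geq2$, all of which lie in $[1,m-1]$. Thus Lemma \ref{lem:box} peels off the tail (a summand of leading entry $2$) and iterated Lemma \ref{lem:polygon} splits the $i=1$ block as $\sum_{\ell=0}^{p-2}\big[(1,m-1)_1^\ell,1,(p-\ell)m-1\big]$. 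On the other side, Corollary \ref{cor:hedden} applied to $T_{pm,pm+1}$ together with iterated Lemma \ref{lem:box} gives $\llbracket T_{pm,pm+1}\rrbracket=\sum_{k\geq1}[k,pm-k]$. The $\ell=0$ and $k=1$ summands are both $[1,pm-1]$ and cancel; the $\ell=1$ summand is $[1,m-1,1,(p-1)m-1]$; and every other summand either has leading entry $\geq2$ or, by Lemma \ref{lem:order j}, is $\ll[1,m-1,1,(p-1)m-1]$. Hence $\llbracket K_{(i,j)}\rrbracket\sim_\textup{Ar}[1,m-1,1,(p-1)m-1]=[1,i,1,2i+1+j(i+1)]$, using $(p-1)m-1=(j+2)(i+1)-1=2i+1+j(i+1)$.

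For $i>0$, $j<0$ (so $p=|j|+3$), the same scheme applies with the two cables $\llbracket T_{p,p+1;m,(p-1)pm\pm1}\rrbracket$ in place of the cable and the torus knot: by \eqref{eqn:initialstair} their polygon decompositions agree in the first $p-2$ summands and differ only in the last, which is $[(1,m-1)_1^{p-2},1,2m-1]$ for the $+1$ cable and $[(1,m-1)_1^{p-2},1,2m-2]$ for the $-1$ cable. After cancelling the common summands (the blocks of index $\geq2$ again contribute only summands of leading entry $\geq2$), Lemma \ref{lem:order j} shows $[(1,m-1)_1^{p-2},1,2m-1]\gg[(1,m-1)_1^{p-2},1,2m-2]$ and dominates all remaining summands, so $\llbracket K_{(i,j)}\rrbracket\sim_\textup{Ar}[(1,m-1)_1^{p-2},1,2m-1]=[(1,i)_1^{-j},1,i,1,2i+1]$. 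For $i=0$: when $j=0$ one checks directly, via the cabling formula for the Alexander polynomial or Lemma \ref{lem:cablestaircase} with $p=2$, that $T_{2,3;2,3}$ and $T_{3,4}$ have equal Alexander polynomials and hence equal knot Floer complexes, so $\llbracket K_{(0,0)}\rrbracket=0$; when $j>0$, Corollary \ref{cor:hedden} and Lemma \ref{lem:cablestaircase} (with $p=2$) show that the $\gg$-leading summand $[1,p-1]$ of $\llbracket T_{p,p+1}\rrbracket$ cancels against the leading summand (namely $[1,p-1]$, for either parity of $p$) of $\llbracket T_{2,3;\lceil p/2\rceil,2\lfloor p/2\rfloor+1}\rrbracket$, leaving $\llbracket K_{(0,j)}\rrbracket\sim_\textup{Ar}[2,p-2]=[1,0,1,1+j]$.

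The main obstacle I anticipate is entirely the bookkeeping: confirming the hypotheses of Lemmas \ref{lem:box} and \ref{lem:polygon} for the specific step sequences of \eqref{eqn:cableseq}--\eqref{eqn:initialstair} (that every entry outside the leading block lies in $[1,m-1]$, and that the polygon-gluing conditions $q\geq\max p_\ell$ and $d\leq\min c_\ell$ hold at each stage), handling the index shift by one in Lemma \ref{lem:cablestaircase}, and dealing with the small cases $m=2$ in which the leading block already exhausts the relevant half-sequence. These are routine but must be verified to make the cancellation-and-domination argument rigorous.
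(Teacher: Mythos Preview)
Your proposal is correct and follows essentially the same route as the paper: decompose each staircase via Lemmas \ref{lem:box} and \ref{lem:polygon} (using \eqref{eqn:initialstair} for the leading block), cancel the matching summands, and invoke Lemmas \ref{lem:order i} and \ref{lem:order j} to isolate the dominant term in each case. One small point of wording to watch in Case~2: the tails coming from the blocks of index $\geq 2$ in the $+1$ and $-1$ cables are \emph{not} equal and do not cancel against each other; rather (as you note parenthetically) each has leading entry $2$ and is therefore $\ll$ the surviving $[(1,m-1)_1^{p-2},1,2m-1]$ term, which is exactly how the paper disposes of them.
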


\begin{proof}
Making inductive use of Lemma \ref{lem:box} on $[(t_s)_{s=1}^{m-1}]$ of Corollary \ref{cor:torusstaircase}, we obtain
\begin{equation} \label{eqn:torusdecomp}
\llbracket T_{m,m+1} \rrbracket = \sum_{k=1}^n [k, m-k] + [(t_s)_{s=2n+1}^{m-1}]
\end{equation}
for any $0 \leq n < \lceil m/2 \rceil$. Using (\ref{eqn:initialstairs}), we can apply Lemma \ref{lem:box} to $[(x_s)_{s=1}^{2pg}]$ and $[(y_s)_{s=2}^{2pg}]$ of Lemma \ref{lem:cablestaircase} to obtain
\begin{equation} \label{eqn:cabledecomp1}
\begin{aligned}
\llbracket T_{m,m+1;p,pm(m-1)+1} \rrbracket &= [(x_s)_{s=1}^{4g}] + [(x_s)_{s=4g+1}^{2gp}] \\
\llbracket T_{m,m+1;p,pm(m-1)-1} \rrbracket &= [(y_s)_{s=2}^{4g+1}] + [(y_s)_{s=4g+2}^{2gp}],
\end{aligned}
\end{equation}
where
\begin{align*}
[(x_s)_{s=1}^{4g}] &= [((1, p-1)^k\!, 1, p(m-k)-1)_{k=0}^{m-2}] \\
[(y_s)_{s=2}^{4g+1}] &= [((1, p-1)^k\!, 1, p(m-k)-1)_{k=0}^{m-3}\!, (1, p-1)^{m-2}\!, 1, 2p-2].
\end{align*}
In addition, inductive use of Lemma \ref{lem:polygon} gives
\begin{equation} \label{eqn:cabledecomp2}
\begin{aligned}{}
[(x_s)_{s=1}^{4g}] &= \smashoperator{\sum_{k=0}^{m-2}} [(1, p-1)^k\!, 1, p(m-k)-1] \\
[(y_s)_{s=2}^{4g+1}] &= \smashoperator{\sum_{k=0}^{m-3}} [(1, p-1)^k\!, 1, p(m-k)-1] + [(1, p-1)^{m-2}\!, 1, 2p-2].
\end{aligned}
\end{equation}
We use the class decompositions (\ref{eqn:torusdecomp}, \ref{eqn:cabledecomp1}, \ref{eqn:cabledecomp2}) along with the sequence definitions (\ref{eqn:cableseq}, \ref{eqn:torusseq}) in the following calculations.

\textbf{\boldmath Case 1: $i > 0,\, j \geq 0$.} We have $\llbracket K_{(i, j)} \rrbracket = \llbracket T_{m,m+1;p,pm(m-1)+1} \rrbracket - \llbracket T_{pm,pm+1} \rrbracket$. Hence $\llbracket K_{(i, j)} \rrbracket$ is given by
\begin{align*}
&\smashoperator{\sum_{k=0}^{m-2}} [(1, p-1)^k\!, 1, p(m-k)-1] + [(x_s)_{s=4g+1}^{2gp}] - [1, pm-1] - [(t_s)_{s=3}^{pm-1}] \\
&= \smashoperator{\sum_{k=1}^{m-2}} [(1, p-1)^k\!, 1, p(m-k)-1] + [2, (x_s)_{s=4g+2}^{2gp}] - [2, (t_s)_{s=4}^{pm-1}],
\end{align*}
where the $[1, pm-1]$ term cancels. By Lemmas \ref{lem:order i} and \ref{lem:order j}, $[1, p-1, 1, p(m-1)-1] \gg$ the other remaining terms so that $\llbracket K_{(i, j)} \rrbracket \sim_\text{Ar} [1, p-1, 1, p(m-1)-1]$.

\textbf{\boldmath Case 2: $i > 0,\, j < 0$.} We have $\llbracket K_{(i, j)} \rrbracket = \llbracket T_{m,m+1;p,pm(m-1)+1} \rrbracket - \llbracket T_{m,m+1;p,pm(m-1)-1} \rrbracket$. Hence $\llbracket K_{(1, j)} \rrbracket = [(1, 1)^{m-2}\!, 1, 3] - [(1, 1)^{m-2}\!, 1, 2] \sim_\text{Ar} [(1, 1)^{m-2}\!, 1, 3]$, and for $i \geq 2$, $\llbracket K_{(i, j)} \rrbracket$ is given by
\begin{align*}
&\smashoperator{\sum_{k=0}^{m-2}} [(1, p-1)^k\!, 1, p(m-k)-1] + [(x_s)_{s=4g+1}^{2pg}] - \smashoperator{\sum_{k=0}^{m-3}} [(1, p-1)^k\!, 1, p(m-k)-1] \\
&\qquad - [(1, p-1)^{m-2}\!, 1, 2p-2] - [(y_s)_{s=4g+2}^{2pg}] \\
&= [(1, p-1)^{m-2}\!, 1, 2p-1]  + [2, (x_s)_{s=4g+2}^{2pg}] - [(1, p-1)^{m-2}\!, 1, 2p-2] - [2, (y_s)_{s=4g+3}^{2pg}],
\end{align*}
where each $[(1, p-1)^k\!, 1, p(m-k)-1]$ term for $0 \leq k \leq m-3$ cancels. By Lemmas \ref{lem:order i} and \ref{lem:order j}, $[(1, p-1)^{m-2}\!, 1, 2p-1] \gg$ the other remaining terms so that $\llbracket K_{(i, j)} \rrbracket \sim_\text{Ar} [(1, p-1)^{m-2}\!, 1, 2p-1]$.

\textbf{\boldmath Case 3: $i = 0,\, j > 0$.} Letting $n = \lceil m/2 \rceil$ and noting $[1, 0, 1, 1+j] = [2, 1+j]$, we can rephrase the lemma for this case as
\[ K_{(0, 2n-(7\mp1)/2)} = T_{2n,2n\pm1} \cs -T_{2,3;n,2n\pm1} \implies \llbracket K_{(0, j)} \rrbracket \sim_\text{Ar} [2, 1+j]. \]
For $j$ odd, we have $\llbracket K_{(0, j)} \rrbracket = \llbracket T_{2n,2n+1} \rrbracket - \llbracket T_{2,3;n,2n+1} \rrbracket$. Hence $\llbracket K_{(0, 1)} \rrbracket = [1, 3] + [2] - [1, 3] = [2] \sim_\text{Ar} 2[2] = [2, 2]$, and for $j \geq 3$, $\llbracket K_{(0, j)} \rrbracket$ is given by
\begin{align*}
&\sum_{k=1}^{2} [k, 2n-k] + [(t_s)_{s=5}^{2n-1}] - [1, 2n-1] - [(x_s)_{s=5}^{2n}] \\
&= [2, 2n-2] + [3, (t_s)_{s=6}^{2n-1}] - [2, n-2, (x_s)_{s=7}^{2n}],
\end{align*}
where the $[1, 2n-1]$ term cancels. By Lemma \ref{lem:order i}, $[2, 2n-2] \gg$ the other remaining terms so that $\llbracket K_{(0, j)} \rrbracket \sim_\text{Ar} [2, 2n-2] = [2, m-2]$. For $j$ even, we similarly find $\llbracket K_{(0, j)} \rrbracket \sim_\text{Ar} [2, 2n-3] = [2, m-2]$.

The result for $(i, j) > (0, 0)$ follows by substituting $(i+1)$ for $p$ and $(|j|+3)$ for $m$ in each case, and $\llbracket K_{(0, 0)} \rrbracket = \llbracket T_{3,4} \rrbracket - \llbracket T_{2,3;2,3} \rrbracket = [1, 2] - [1, 2] = 0$.
\end{proof}

We now conclude with the proof of Theorem \ref{thm:ordertype}.

\begin{proof}[Proof of Theorem \ref{thm:ordertype}]
For $i < i'$, we have that $\llbracket K_{(i, j)} \rrbracket \ll \llbracket K_{(i'\!, j')} \rrbracket$ by Lemma \ref{lem:order i}. For $j < j'$, we have that $\llbracket K_{(i, j)} \rrbracket \ll \llbracket K_{(i, j')} \rrbracket$ by Lemma \ref{lem:order i} ($i = 0$) and Lemma \ref{lem:order j} ($i > 0$). Thus,
\[ (i, j) < (i'\!, j') \implies \llbracket K_{(i, j)} \rrbracket \ll \llbracket K_{(i'\!, j')} \rrbracket. \]
It follows that $\{H_{\llbracket K_{(i, j)} \rrbracket} \mid (i, j) \in S\}$ is a filtration on $\cF$ with
\[ H_{\llbracket K_{(i, j)} \rrbracket} \subset H_{\llbracket K_{(i'\!, j')} \rrbracket} \quad \text{if } (i, j) < (i'\!, j') \]
and $\Z \subset H_{\llbracket K_{(i'\!, j')} \rrbracket}/H_{\llbracket K_{(i, j)} \rrbracket}$, generated by $\llbracket K_{(i'\!, j')} \rrbracket$. Recall the map $\phi: \cC \to \cF$. Letting
\[ \cF_{(i, j)} := \phi^{-1}\big[ H_{\llbracket K_{(i, j)} \rrbracket} \big], \]
we pull back to a filtration $\{\cF_{(i, j)} \mid (i, j) \in S\}$ on $\cC$ with $\Z \subset \cF_{(i'\!, j')}/\cF_{(i, j)}$ for $(i, j) < (i'\!, j')$.
\end{proof}

\bibliographystyle{amsalpha}
\bibliography{mybib}

\providecommand{\bysame}{\leavevmode\hbox to3em{\hrulefill}\thinspace}
\providecommand{\MR}{\relax\ifhmode\unskip\space\fi MR }
\providecommand{\MRhref}[2]{%
  \href{http://www.ams.org/mathscinet-getitem?mr=#1}{#2}
}
\providecommand{\href}[2]{#2}
\begin{thebibliography}{COT03}

\bibitem[CHH12]{CHH}
Tim~D. Cochran, Shelly Harvey, and Peter Horn, \emph{Filtering smooth
  concordance classes of topologically slice knots}, preprint (2012),
  arXiv:1201.6283v1.

\bibitem[CHL09]{CHL}
Tim~D. Cochran, Shelly Harvey, and Constance Leidy, \emph{Knot concordance and
  higher-order {B}lanchfield duality}, Geom. Topol. \textbf{13} (2009), no.~3,
  1419--1482.

\bibitem[COT03]{COT}
Tim~D. Cochran, Kent~E. Orr, and Peter Teichner, \emph{Knot concordance,
  {W}hitney towers and {$L^2$}-signatures}, Ann. of Math. (2) \textbf{157}
  (2003), no.~2, 433--519.

\bibitem[Hed09]{Heddencable}
Matthew Hedden, \emph{On knot {F}loer homology and cabling {II}}, Int. Math.
  Res. Not. IMRN (2009), no.~12, 2248--2274.

\bibitem[HLR12]{HeddenLR}
Matthew Hedden, Charles Livingston, and Daniel Ruberman, \emph{Topologically
  slice knots with nontrivial {A}lexander polynomial}, Adv. Math. \textbf{231}
  (2012), no.~2, 913--939.

\bibitem[Hom11]{Homsmooth}
Jennifer Hom, \emph{The knot {F}loer complex and the smooth concordance group},
  preprint (2011), to appear in Comment. Math. Helv., arXiv:1111.6635v1.

\bibitem[Hom12]{Homcable}
\bysame, \emph{Bordered {H}eegaard {F}loer homology and the tau-invariant of
  cable knots}, preprint (2012), to appear in Journal of Topology,
  arXiv:1202.1463v1.

\bibitem[LOT08]{LOT}
Robert Lipshitz, Peter Ozsv\'ath, and Dylan Thurston, \emph{Bordered {H}eegaard
  {F}loer homology: {I}nvariance and pairing}, preprint (2008),
  arXiv:0810.0687v4.

\bibitem[OS03]{OS4ball}
Peter Ozsv\'ath and Zolt\'an Szab\'o, \emph{Knot {F}loer homology and the
  four-ball genus}, Geom. Topol. \textbf{7} (2003), 615--639.

\bibitem[OS04]{OSknot}
\bysame, \emph{Holomorphic disks and knot invariants}, Adv. Math. \textbf{186}
  (2004), no.~1, 58--116.

\bibitem[OS05]{OSlens}
\bysame, \emph{On knot {F}loer homology and lens space surgeries}, Topology
  \textbf{44} (2005), no.~6, 1281--1300.

\bibitem[Ras03]{Rknot}
Jacob Rasmussen, \emph{Floer homology and knot complements}, Ph.D. thesis,
  Harvard University, 2003, arXiv:math/0306378v1.

\bibitem[Ras04]{Rlens}
\bysame, \emph{Lens space surgeries and a conjecture of {G}oda and
  {T}eragaito}, Geom. Topol. \textbf{8} (2004), 1013--1031.

\end{thebibliography}

\end{document}